\numberwithin{equation}{section}
\newtheorem{example}{Example}[section]
\newtheorem{theorem}[example]{Theorem}
\newtheorem{lemma}[example]{Lemma}
\newtheorem*{maintheorem*}{Main Theorem}
\numberwithin{equation}{section}
\renewcommand{\i}{\ifmmode\mathit{\mathchar"7010 }\else\char"10 \fi}
\renewcommand{\j}{\ifmmode\mathit{\mathchar"7011 }\else\char"11 \fi}
\newcommand{\R}{\mathbb{R}}
\newcommand{\norm}[1]{\left\|#1\right\|}
\newcommand{\pt}{\partial_t}
\newcommand{\ptt}{\partial_{tt}^2}
\newcommand{\px}{\partial_x }
\def\begi{\begin{itemize}}
\def\endi{\end{itemize}}
\def\bega{\begin{array}}
\def\enda{\end{array}}
\def\R{\mathbb{R}}
\def\Xi{{\bm\xi}}
\journal{}
\begin{document}

\begin{frontmatter}



\title{A spectral method for dispersive solutions of the nonlocal Sine-Gordon equation}


\author[dei]{A. Coclite}
\ead{alessandro.coclite@poliba.it}

\author[mat]{L. Lopez\corref{cor}}
\ead{luciano.lopez@uniba.it}

\author[dei]{S. F. Pellegrino}
\ead{sabrinafrancesca.pellegrino@poliba.it}

\cortext[cor]{Corresponding author}

\address[dei]{Dipartimento di Ingegneria Elettrica e dell'Informazione (DEI), Politecnico di Bari,\\ Via Re David 200 -- 70125 Bari, Italy}

\address[mat]{Dipartimento di Matematica, Universit\`{a} degli Studi di Bari,\\ Via Orabona 4 -- 70125 Bari, Italy}

\begin{abstract}
Moved by the need for rigorous and reliable numerical tools for the analysis of peridynamic materials, the authors propose a model able to capture the dispersive features of nonlocal soliton-like solutions obtained by a peridynamic formulation of the Sine-Gordon equation. The analysis of the Cauchy problem associated to the peridynamic Sine-Gordon equation with local Neumann boundary condition is performed in this work through a spectral method on Chebyshev polynomials nodes joined with the St{\"o}rmer-Verlet scheme for the time evolution. The choice for using the spectral method resides in the resulting reachable numerical accuracy, while, indeed, Chebyshev polynomials allow straightforward implementation of local boundary conditions. Several numerical experiments are proposed for thoroughly describe the ability of such scheme. Specifically, dispersive effects of the specific peridynamic kernel are demonstrated, while the internal energy behavior of the specified peridynamic operator is studied.

\begin{keyword}
Peridynamics, Nonlocal Sine-Gordon, Spectral Methods, Nonlocal Solitons, Numerical Methods.

\MSC[2020] 74A70 \sep 74B10 \sep 70G70 \sep 35Q70
\end{keyword}
\end{abstract}
\end{frontmatter}


\section*{Introduction}

The Sine-Gordon Equation is a nonlinear partial differential equation that describes one-dimensional waves in a continuous media. It is often seen in various field theories, such as condensed matter physics and nonlinear optics. In the one-dimensional case, it is an example of a completely integrable system, meaning that the equation possesses an infinite number of conservation laws and that its solutions can be written in terms of spectral parameters through the inverse scattering method~\cite{asano1990algebraic,liang2021infinitely,wang2021novel,blas2020riccati}. This property leads to exact solutions of the Sine-Gordon equation, known as solitons. Solitons are stable and localized nonlinear waves that retain their shape as they propagate. They have a crucial importance both in mathematics and physics since many physical systems present soliton-like behaviors. Solitons are widely seen in the theory but also exist as sound waves in nonlinear materials and as kinks or domain walls in nano-magnetic materials. The Sine-Gordon equation reads:

\begin{equation}
\label{eq:sg}
\partial_{tt}^2 u(x, t) - c^2 \partial_{xx}^2 u(x, t) +   \sin(u(x,t)) = 0 \, ,
\end{equation}
with $x \in \R$ and $t\in \R^+ \setminus \{0\}$ being the spatial and temporal coordinates, respectively, $u(x, t)$ the displacement field and $c$ the propagation speed. The choice of the nonlinear forcing term $\sin(u(x, t))$ allows for the formation of solitons. Long-range interactions and memory effects can be introduced through an additive integral term to $\sin(u(x, t))$ accounting for the global influence on the solution at a point~\cite{mivskinis2005nonlinear,alfimov1998solitary,wang2015infinitely}:
\begin{equation}
\label{eq:nsg}
\partial_{tt}^2 u(x, t) - c^2 \partial_{xx}^2 u(x, t) + \sin(u(x, t)) - \int_{-\infty}^{\infty} S(x - y) \sin(u(y, t)) \,dy = 0 \, .
\end{equation}
\(S(x - y)\) is a nonlocal kernel regulating the influence of the entire spatial domain on the wave dynamics. The specific form of such kernel defines range and character of nonlocal interactions. Eq.\eqref{eq:nsg} often exhibits interesting and rich dynamics, and its solutions can involve the formation of various types of localized structures, including the so-called nonlocal solitons~\cite{alfimov2005numerical,tang2015nonlocal,xiang2022local}. 

Indeed, nonlocality can be prescribed also for the wave internal response function $\partial_{xx} u(x, t)$ by promoting the local Laplace operator to an integral operator. This theory is called peridynamics and corresponds to a nonlocal mechanics continuum theory extending local elasticity to long-range interactions thus accounting for material discontinuities, damages and failures~\cite{Sill,Sill1,MR3831320, MR3297136, MR0307573, KRONER1967731, SILLING201073, MR2592410, MR2348150, MR2430855}. The peridynamic Sine-Gordon equation reads:

\begin{equation}
\label{psg}
\partial_{tt}^2 u(x, t) - \mathcal{L}(u(\cdot, t))(x) + \sin(u(x, t)) = 0\, ,
\end{equation}
where \(\mathcal{L}(u(\cdot, t))\) is the peridynamic kernel. As well-known, traditional continuum mechanics constitutive equations rely on spatial gradients of displacements and stresses. On the contrary, within peridynamic framework, the material internal energy is defined by integral (and often fractional) operators involving information from a region around a given point usually referred as \textit{peridynamic horizon}. Moreover, the integro-differential nature of Eq.~\eqref{psg} depicts the material dynamics at several scales at the same time. This feature lies in its ability to capture the effects of small-scale features on the overall behavior of the solution~\cite{Coclite_2021,CDFMV,coclite20232}. The synergy between nonlocal differential equations and multiscale modeling enhances the ability to simulate and understand complex physical phenomena occurring in materials and systems with intricate structures and behaviors across multiple scales. {In this context, any model can be generalized to a nonlocal model by re-thinking the differential operator involved in the local formulation. Indeed, the physically soundness of the formulation as well as its convergence to the local original formulation is demanded to the constitutive choices made for $\mathcal{L}(u(\cdot, t))(x)$.} The complexity of the peridynamic operator would often require the use of raffinate numerical tools for approximating the evolution of the system in time. In the last two decades the increasing interest in peridynamics grossly divided the researchers in two main directions: finite element models~\cite{lipton14,lipton16,jha18,jha19,jha21,lipton21} and mesh-free methods~\cite{coclite20242,silling2005meshfree,gu2017voronoi,bessa2014meshfree} or quadrature methods \cite{shojaei2022hybrid}. Only in the very last years spectral methods~\cite{jafarzadeh2020efficient,LP2021,LP2022,LPcheby2022,LPcheby,LPeigenv} and boundary element methods \cite{liang2021boundary} have been developed enlarging the range of suitable numerical tools. Indeed, due to the convergence properties of different numerical scheme, mesh-free and quadrature methods are commonly chosen when dealing with nonlinear kernels; on the contrary, spectral methods and boundary element methods are usually chosen for accurate numerical solutions of complex semilinear or linear peridynamics. As for finite elements discretization, the number of proposed schemes in literature is so large and so diverse that a main application is hard to find.
Moreover, the specific formulation of a peridynamic equation, including the choice of the kernel function and the underlying assumptions, may vary based on the requirements of the specific application or material being studied. As the matter of facts, by combining some grandstanding ingredients such as, nonlocality, fractionality, nonlinearity and singularity, the ability of a specific peridynamic kernel is defined~\cite{DCFP}.

In this paper we consider the fractional one-dimensional kernel proposed and discussed in~\cite{Coclite_2018}:
\begin{equation}
\label{eq:1}
\mathcal{L}u(x,t)=\int_{B_{\delta}(x)} f(x-x',u(x,t)-u(x',t))\,dx'=\int_{B_{\delta}(x)} \frac{u(x,t)-u(x',t)}{\left|x-x'\right|^{1+2\alpha}}\, dx'\, ,
\end{equation}
with $\alpha\in(0,1)$ and $B_\delta (x)$ being the interval centered on $x$ by radius $\delta$. 
This choice is motivated under the following four physically-sound constitutive assumptions for the pairwise force interaction $f:\R \times (\R \setminus \{0\}) \rightarrow \R$:
\begin{itemize}
\item $f\in C^1(\R\times (\R \setminus \{0\});\R)$\, ,
\item $f(x-x',u(x)-u(x'))=f(x'-x,u(x)-u(x'))$ for every $(x-x',u(\cdot,t))\in \R \times (\R \setminus \{0\})$\, ,
\item $f(x-x',u(x)-u(x'))=-f(x'-x,u(x')-u(x)$ for every $(x-x',u(\cdot,t))\in \R \times (\R \setminus \{0\})$\, ,
\item there exists a function $\Phi\in C^2(\R \times (\R \setminus \{0\}))$ such that $\Phi = \nabla_u f$\, .
\end{itemize}
Due to the choice made for the pairwise force interaction, the energy space is $H^{\alpha}$ and $\mathcal{L}u(x,t)$ corresponds to the censured--$\alpha$-Laplacian ($\alpha$ being the fractionality parameter). {This formulation has been proven to converge to the local Laplace operator~\cite{Coclite_2018,orlando2024}; in this light it represents the natural generalization of the local sine-Gordon equation.}

The analysis of the evolutionary problem associated to Eq.\eqref{psg} with local Neumann boundary condition is addressed here by adopting the spectral method approach on Chebyshev polynomials nodes while the temporal integration is demanded to the St{\"o}rmer-Verlet scheme, thus returning a suitable, efficient and accurate computational tool~\cite{LP2021,LP2022,LPeigenv}. This method is peculiarly suited for integral operator that can be expressed as convolution product so that preserving the computational properties of the Fast Fourier Transform (FFT) algorithm. Moreover, discretizing the spatial domain with trigonometric polynomials boundary conditions may be readily imposed~\cite{LPcheby2022,LPcheby}.

The paper is organized as follows. In Section~\ref{linmod} we present the evolutionary problem associated to the peridynamic Sine-Gordon equation and we prove the conservation of the energy of the model. In Section~\ref{sec:spectraldiscr} we propose a spectral discretization for the evolutionary problem and prove the convergence of the solutions of the semidicretized problem to the ones of the continuous problem. In Section~\ref{sec:fully} the full discretization of the semi-discrete problem is obtained by using the St{\"o}rmer-Verlet scheme. Section~\ref{sec:numerics} is devoted to the numerical simulations: in Section~\ref{sec:validation} we firstly critically analyze strength and limitation of the proposed spectral method by validating the latter against a consolidated {second}-order finite difference scheme within several benchmark tests. Such tests include soliton-like solutions. Then, in Section~\ref{sec:dispersive} we demonstrate the presence of dispersive effects on the solution of the nonlocal evolutionary problem and, lastly, {in Section~\ref{sec:energy} we experimentally study the energy behavior of the fully-discrete scheme. The numerical simulation shows a sort of \textit{almost-preserving} property of the fully-discrete scheme proposed in the sense that unless a certain initial time in which the energy functional presents strong oscillations, then the behavior of the discretized energy oscillates around 1 with a one percent error.} Conclusions and future outlines end the paper.

\section{The model}
\label{linmod}

We consider the nonlocal one-dimensional Sine-Gordon equation given by
\begin{equation}
\label{eq:nlsineG}
\ptt u(x,t) = \mathcal{L} u(x,t) -\sin\left(u(x,t)\right),
\end{equation}
defined on a compact domain $\Omega\subset \R$, where $\mathcal{L}$ is the integro-differential operator defined as
\begin{equation}
\label{eq:L}
\mathcal{L} u(x,t)=\int_{B_{\delta}(x)} \frac{u(x,t)-u(x',t)}{\left|x-x'\right|^{1+2\alpha}}\,dx',\qquad \alpha\in(0,1),
\end{equation}
where $B_{\delta}(x)$ is the $x$-centered ball with radius $\delta>0$, with initial conditions
\begin{equation}
\label{eq:initcond}
u(x,0)=u_0(x),\quad \pt u(x,0)=v_0(x).
\end{equation}

The well-posedness of the Cauchy problem~\eqref{eq:nlsineG}-\eqref{eq:initcond} in the energy space and in the framework of hyper-elastic constitutive assumptions can be found in~\cite{Coclite_2018}. Therein it is also showed that the energy associated to~\eqref{eq:nlsineG} is given by
\begin{equation}
\label{eq:energy}
\begin{split}
E[u](t)&= \frac{1}{2}\int_{\Omega} \left|\partial_t u(x,t)\right|^2 dx
-\frac{1}{4}\int_{\Omega}\int_{B_{\delta}(x)}\frac{\left(u(x,t)-u(x',t)\right)^2}{|x-x'|^{1+2\alpha}}dx'dx\\&-\int_{\Omega}\left(1-\cos\left(u(x,t)\right)\right) dx 
\end{split}
\end{equation}

\begin{lemma}[Energy preserving property]
\label{lm:energy}
The energy's functional defined in~\eqref{eq:energy} is preserved by time.
\end{lemma}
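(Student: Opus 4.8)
The plan is to show that $\frac{d}{dt}E[u](t) = 0$ by differentiating the three terms of~\eqref{eq:energy} under the integral sign and then using the equation~\eqref{eq:nlsineG} together with the symmetry of the kernel to cancel everything. First I would compute the derivative of the kinetic term,
\begin{equation*}
\frac{d}{dt}\left(\frac{1}{2}\int_{\Omega}\left|\pt u(x,t)\right|^2 dx\right) = \int_{\Omega}\pt u(x,t)\,\ptt u(x,t)\,dx,
\end{equation*}
and the derivative of the potential term $\int_{\Omega}\left(1-\cos(u)\right)dx$, which gives $\int_{\Omega}\sin(u(x,t))\,\pt u(x,t)\,dx$. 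The crux is the nonlocal elastic term: differentiating
\begin{equation*}
-\frac{1}{4}\int_{\Omega}\int_{B_{\delta}(x)}\frac{\left(u(x,t)-u(x',t)\right)^2}{|x-x'|^{1+2\alpha}}\,dx'dx
\end{equation*}
in $t$ produces
\begin{equation*}
-\frac{1}{2}\int_{\Omega}\int_{B_{\delta}(x)}\frac{\left(u(x,t)-u(x',t)\right)\left(\pt u(x,t)-\pt u(x',t)\right)}{|x-x'|^{1+2\alpha}}\,dx'dx.
\end{equation*}

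The key step is then to exploit the antisymmetry of the integrand under the swap $x\leftrightarrow x'$: since $|x-x'|^{1+2\alpha}$ is symmetric and the factor $\left(u(x,t)-u(x',t)\right)\left(\pt u(x,t)-\pt u(x',t)\right)$ is symmetric under that swap, while the double integral can be symmetrized over $\Omega\times B_\delta(x)$ (taking $\delta$ small enough relative to $\Omega$, or interpreting the outer integral consistently with the Neumann boundary condition so that the domain of integration is symmetric), one gets
\begin{equation*}
-\frac{1}{2}\int_{\Omega}\int_{B_{\delta}(x)}\frac{\left(u(x,t)-u(x',t)\right)\left(\pt u(x,t)-\pt u(x',t)\right)}{|x-x'|^{1+2\alpha}}\,dx'dx = -\int_{\Omega}\pt u(x,t)\int_{B_{\delta}(x)}\frac{u(x,t)-u(x',t)}{|x-x'|^{1+2\alpha}}\,dx'\,dx,
\end{equation*}
which is exactly $-\int_{\Omega}\pt u(x,t)\,\mathcal{L}u(x,t)\,dx$ by the definition~\eqref{eq:L}. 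Adding the three contributions yields
\begin{equation*}
\frac{d}{dt}E[u](t) = \int_{\Omega}\pt u(x,t)\Big(\ptt u(x,t) - \mathcal{L}u(x,t) + \sin(u(x,t))\Big)dx = 0,
\end{equation*}
where the last equality is precisely~\eqref{eq:nlsineG}.

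The main obstacle I anticipate is justifying the interchange of differentiation and integration and the symmetrization of the singular double integral: the kernel $|x-x'|^{-1-2\alpha}$ is not integrable near the diagonal in the naive sense, so one must work in the energy space $H^\alpha$, interpret $\mathcal{L}u$ and the associated bilinear form in the Gagliardo/weak sense (as done in~\cite{Coclite_2018}), and appeal to the regularity of the solution guaranteed by the well-posedness result there to make the Fubini/symmetrization argument and the Leibniz rule rigorous. Handling the boundary terms correctly — i.e., verifying that the local Neumann condition makes no boundary contribution appear when one symmetrizes the $x\leftrightarrow x'$ roles near $\partial\Omega$ — is the other delicate point, and would be argued exactly as in the well-posedness analysis of~\cite{Coclite_2018}. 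Once these technical points are granted, the cancellation above is immediate.
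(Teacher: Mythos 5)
Your proposal is correct and follows essentially the same route as the paper: differentiate the three terms of $E[u]$ under the integral sign, symmetrize the singular double integral by swapping $x\leftrightarrow x'$ (which is exactly the paper's step showing $I_3=-I_2$ via a change of variables) to identify it with $\int_\Omega \pt u\,\mathcal{L}u\,dx$, and then invoke equation~\eqref{eq:nlsineG}. You even reproduce the paper's tacit sign convention (treating the potential term as $+\int_\Omega(1-\cos u)\,dx$ rather than the minus sign literally written in~\eqref{eq:energy}) and explicitly flag the symmetrization-near-the-boundary and interchange-of-limits caveats that the paper's own change-of-variables argument glosses over.
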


\begin{proof}
We have
\begin{equation}
\label{eq:derenergy}
\begin{split}
\frac{d}{dt}E[u](t)&=\int_\Omega \partial_t u(x,t)\partial_{tt}^2 u(x,t) dx\\
&\quad -\frac{1}{2}\int_\Omega\int_{B_{\delta}(x)} \frac{\left(u(x,t)-u(x',t)\right)}{\left|x-x'\right|^{1+2\alpha}}\left(\partial_t u(x,t)-\partial_t u(x',t)\right) dx'dx\\
&\quad +\int_\Omega\partial_t u(x,t)\sin\left(u(x,t)\right) dx\\
&=\underbrace{\int_\Omega\partial_t u(x,t)\left(\partial_{tt}^2 u(x,t) + \sin\left(u(x,t)\right)\right) dx}_{{}=:I_1} \\
&\quad -\frac{1}{2} \underbrace{\int_\Omega\int_{B_{\delta}(x)} \frac{\left(u(x,t)-u(x',t)\right)}{\left|x-x'\right|^{1+2\alpha}}\partial_t u(x,t) dx'dx}_{{}=:I_2}\\
&\quad +\frac{1}{2} \underbrace{\int_\Omega\int_{B_{\delta}(x)} \frac{\left(u(x,t)-u(x',t)\right)}{\left|x-x'\right|^{1+2\alpha}}\partial_t u(x',t) dx'dx}_{{}=:I_3}.
\end{split}
\end{equation}

We need to show that $I_3=-I_2$. If we use the constitutive assumptions on the pair-wise force intercation function, by making a change of variables and rearranging terms in $I_3$ we find
\begin{equation}
\label{eq:I3energy}
\begin{split}
I_3&=
\frac{1}{2} \int_\Omega\int_{B_{\delta}(0)} \frac{u(x,t)-u(x-x',t)}{\left|x'\right|^{1+2\alpha}}\partial_t u(x-x',t) dx'dx\\
&= \frac{1}{2} \int_\Omega\int_{B_{\delta}(0)} \frac{u(x,t)-u(x+x',t)}{\left|x'\right|^{1+2\alpha}}\partial_t u(x+x',t) dx'dx\\
&= - \frac{1}{2} \int_\Omega\int_{B_{\delta}(0)} \frac{u(x+x',t)-u(x,t)}{\left|x'\right|^{1+2\alpha}}\partial_t u(x+x',t) dx'dx\\
&= - \frac{1}{2} \int_\Omega\int_{B_{\delta}(x)} \frac{u(x,t)-u(x',t)}{\left|x-x'\right|^{1+2\alpha}}\partial_t u(x,t) dx'dx\\
&=-I_2.
\end{split}
\end{equation}
Sustituting~\eqref{eq:I3energy} into~\eqref{eq:derenergy} and using~\eqref{eq:nlsineG} and the definition of the integral operator $\mathcal{L}$ in~\eqref{eq:L}, we get the claim.
\end{proof}

\section{Spectral semi-discretization}
\label{sec:spectraldiscr}

{Pseudo-spectral methods are often used to study nonlinear wave phenomena~\cite{rev2018,rev2021}. They are based on the implementation of the fast-Fourier transform on equidistant collocation points and require the imposition of periodic boundary conditions. A different approach allowing to overcome the limitation of such boundary condition consists in considering the Chebyshev polynomials and the derivative matrix. Following this strategy, }the nonlocal Sine-Gordon equation~\eqref{eq:nlsineG} can be discretized in space by using Chebyshev polynomials.
The use of spectral methods allows us to obtain high accuracy in the profile of the solution and, moreover, this approach is typically used when the integral operator can be expressed in terms of convolution products~\cite{LPcheby,LPcheby2022,LPeigenv}. Indeed, Chebyshev method can exploit the properties of the FFT algorithm to compute efficiently such products. Additionally, the choice of Chebyshev polynomials within the family of trigonometric polynomials allows us to {impose more general boundary conditions.}

The method consists in the approximation of the solution $u(x,t)$ to~\eqref{eq:nlsineG} by a finite linear combination of Chebyshev polynomials of the first kind.

For simplicity we assume the spatial domain to be $[-1,1]$, but a more general spatial interval can be considered by an affine transformation. Moreover we assume to have no-flux boundary conditions
\begin{equation}
\label{eq:bc}
\px u(\pm 1,t)=0.
\end{equation}

We derive the semi-discrete model of~\eqref{eq:nlsineG} as follows.

Let $N>0$ be the total number of discretization points in the spatial domain $\Omega=[-1,1]$ and $x_h=\cos(\pi h/N)$, $h=0,\dots,N$ be the Chebyshev Gauss-Lobatto (CGL) points. We also define $\Omega_x=B_{\delta}(x)\cap[-1,1]$, for any $x\in[-1,1]$, {so that $\Omega_0=B_\delta(0)\cap[-1,1]$.}

Then, if we set
\begin{equation}
\label{eq:kernel}
k(x)=\frac{1}{\left|x\right|^{1+2\alpha}} \chi_{\Omega_0}(x),
\end{equation}
{where $\chi_{\Omega}$ denotes the characteristic function which takes value $1$ if $x\in\Omega$ and value $0$ otherwise, then} we can rewrite equation~\eqref{eq:nlsineG} as follows
\begin{equation}
\label{eq:model}
\ptt u(x,t) = \beta u(x,t) - \left(k\ast u\right)(x,t) - \sin\left(u(x,t)\right),
\end{equation}
where $\beta = \int_{B_\delta(0)} k(s)ds$.

We notice that $\beta$ is well-defined because due to the nonlocal nature of the model, it does not allow self-interactions among material points and as a  consequence the integrand $k$ is not discontinuous over the domain of integration. Hence, we can assume that the kernel $k$ is uniformly bounded.


We look for an approximation of the solution $u(x,t)$ in the form of a finite linear combination of Chebyshev polynomials $T_n(x)$
\begin{equation}
\label{eq:uN}
u^N(x,t)=\sum_{n=0}^N \tilde{u}_n(t) T_n(x),\quad x\in\Omega,\quad t>0,
\end{equation}
where the coefficients $\tilde{u}_n(t)$ represent the discrete Chebyshev coefficients given by\begin{equation}
\label{eq:fct}
\tilde{u}_n=\frac{1}{\gamma_n}\sum_{h=0}^N u(x_h)\ T_n(x_h) w_h,
\end{equation}
where $\gamma_n$ is a normalization constant defined by
\begin{equation}
\label{eq:gamma}
\gamma_n=\begin{cases}
\pi\quad& n=0,N\\
\frac{\pi}{2}\quad& n=1,\dots,N-1
\end{cases}    
\end{equation}
and
\begin{equation}
\label{eq:wh}
    w_h=\begin{cases}
    \frac{\pi}{2N}\quad& h=0,N\\
    \frac{\pi}{N}\quad& h=1,\dots,N-1.
    \end{cases}
\end{equation}
The presence of these constants is required to ensure the orthogonality property of the Chebyshev polynomials with respect to the weight function $w(x)=1/\sqrt{1-x^2}$.

We substitute $u$ by $u^N$ into~\eqref{eq:model}. Since Chebyshev transform, denoted here by $\mathcal{F}$, fulfills the same properties of Fourier transform, we can rewrite a convolution product in the physic space as a multiplication of the Chebyshev transform of each factor in the frequency space. Thus, on each interior collocation point $x_h$, $h=1,\dots,N-1$, equation~\eqref{eq:model} can be approximated as follows
\begin{equation}
\label{eq:semidiscrete}
\ptt u^N(x_h,t)=\beta u^N(x_h,t) - \mathcal{F}^{-1}\left(\mathcal{F}\left(k\right)\mathcal{F}\left(u^N\right)\right)(x_h,t) - \sin\left(u^N(x_h,t)\right).
\end{equation}
In the same way we approximate the initial conditions in~\eqref{eq:initcond} by
\begin{equation}
\label{eq:icdiscrete}
u^N(x_h,0)=u_0(x_h),\qquad \pt u^N(x_h,0)=v_0(x_h),\quad h=0,\dots,N.
\end{equation}

Following~\cite{Canuto86}, in order to impose Neumann boundary conditions to the discrete approximated solution
\begin{equation}
\partial_x u^N(x_0,t)=\partial_x u^N(x_N) =0,
\label{eq:bcdiscrete}
\end{equation} 
on each time level we have to solve the $2\times2$ system
\begin{equation}
\begin{cases}
d_{00}u^N(x_0,t) + d_{0N} u^N(x_N,t) &= -\sum_{h=1}^{N-1} d_{0h} u^N(x_h,t),\\
d_{NN} u^N(x_N,t) + d_{N0}u^N(x_0,t) &= -\sum_{h=1}^{N-1} d_{Nh} u^N(x_h,t),
\end{cases}
\label{eq:bcsystem}
\end{equation}
where $D=\left(d_{ij}\right)$, $i,\,j=0,\dots,N$ is the matrix representing the spectral derivative at the CGL collocation points (an explicit formula for the component of $D$ can be found in~\cite{Trefethen}). {For readers' convenience, we remark that equations~\eqref{eq:bcsystem} basically represent the multiplication of the first row and the last row of the derivative matrix $D$ by $u^N$. Indeed, the left-hand side refers to the derivative of $u^N$ at the boundary meshpoints $x_0$ and $x_N$.}

We can prove the convergence of the semi-discrete scheme~\eqref{eq:semidiscrete}-\eqref{eq:icdiscrete}-\eqref{eq:bcdiscrete} in the framework of weighted Sobolev space $H^s_w\left(\Omega\right)$, $s\ge1$.

We start by introducing the functional setting under consideration. In what follows, $C$ denotes a generic positive constant. We denote by $(\cdot,\cdot)_w$ and $\norm{\cdot}_w$ the inner product and the norm of $L^2_w\left(\Omega\right)$, respectively, namely
\[
(u,v)_w = \int_{\Omega} u(x)v(x)w(x)\,dx,\qquad \norm{u}_w^2 = (u,u)_w,
\]
with $w(x)=\left(\sqrt{1-x^2}\right)^{-1}$.

Let $s>0$, $H^s_w\left(\Omega\right)$ be the weighted Sobolev space and $X_s = \mathcal{C}^1\left(0,T; H^s_w\left(\Omega\right)\right)$ be the space of all continuous functions in $H_w^s\left(\Omega\right)$ whose distributional derivative is in $H_w^s\left(\Omega\right)$, with norm
\[
\norm{u}_{X_s}^2 = \max_{t\in[0,T]}\left(\norm{u(\cdot,t)}_w^2 + \norm{\pt u(\cdot,t)}_w^2\right),
\]
for any $T> 0$.

We introduce the space of Chebyshev polynomials of degree $N$ as follows
\[
S_N = \text{span}\left\{T_k(x)|-N \le k\le N\right\},
\]
and we define the projection operator $P_N: L^2_w(\Omega) \to S_N$ as
\[
P_Nu(x) = \sum_{|k|\le N} \tilde{u}_k T_k(x).
\]
It is such that for any $u\in L^2_w(\Omega)$, the following equality holds
\begin{equation}
\label{eq:orthogonal}
(u-P_Nu,\varphi)_w = 0,\quad\text{for every $\varphi\in S_N$}.
\end{equation}
We have that the projector operator $P_N$ commutes with derivatives in the distributional sense:
\[
\partial_x^q P_Nu = P_N\partial_x^q u,\quad\text{and}\quad\partial_t^q P_Nu = P_N\partial_t^q u.
\]

Then, spectral scheme~\eqref{eq:semidiscrete}-\eqref{eq:icdiscrete}-\eqref{eq:bcdiscrete} can be rewritten by using the projection $P_N$ in the following way
\begin{equation}
\label{eq:schemePN}
\begin{split}
&\ptt u^N = P_N \mathcal{L}(u^N) - P_N \sin\left(u^N\right),\\
&u^N(x,0) = P_N u_0(x),\quad \pt u^N(x, 0) = P_N v(x),\\
&\px u^N(\pm 1,t) =0,
\end{split}
\end{equation}
where $u^{N}(x,t)\in S_N$ for every $0\le t\le T$.

We recall the following lemma which is preliminary to our result.
\begin{lemma}[see~\cite{Canuto}]
\label{lm:sobolev}
Let $0\le \mu\le s$, if $u\in H_w^{s}\left(\Omega\right)$, then the following inequality holds
\begin{equation}
\label{eq:sobolev}
\norm{u-P_N u}_{H_w^{\mu}\left(\Omega\right)} \le C N^{\mu-s}\norm{u}_{H^s_w\left(\Omega\right)},
\end{equation}
for any positive constant $C$.
\end{lemma}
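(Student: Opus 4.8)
Since this is a classical approximation estimate for Chebyshev expansions — quoted here from~\cite{Canuto} — I will only outline the line of argument I would follow. The plan is to convert both sides into statements about the Chebyshev coefficients of $u$ and to conclude by a single frequency split.

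Write $u=\sum_{k\ge0}\tilde u_k T_k$. The polynomials $T_k$ are the eigenfunctions of the singular Sturm--Liouville operator $\mathcal{A}v:=-w^{-1}\bigl(w^{-1}v'\bigr)'=-(1-x^2)v''+xv'$, with $\mathcal{A}T_k=k^2T_k$, and they form an orthogonal basis of $L^2_w(\Omega)$; hence the $L^2_w$-orthogonal projection onto $S_N$ is simply the truncation $P_Nu=\sum_{k=0}^N\tilde u_k T_k$ (since $T_{-k}=T_k$, the index range $|k|\le N$ reduces to $0\le k\le N$), so that $u-P_Nu=\sum_{k>N}\tilde u_k T_k$. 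For $\mu=0$ the estimate is then immediate from Parseval:
\begin{align*}
\norm{u-P_Nu}_w^2 &=\sum_{k>N}|\tilde u_k|^2\norm{T_k}_w^2\\
&\le(1+N^2)^{-s}\sum_{k>N}(1+k^2)^{s}|\tilde u_k|^2\norm{T_k}_w^2
\le C\,N^{-2s}\norm{u}_{H^s_w}^2,
\end{align*}
the last inequality using that $u\in H^s_w$ forces the weighted coefficient moments $\sum_k(1+k^2)^s|\tilde u_k|^2\norm{T_k}_w^2$ to be dominated by $\norm{u}_{H^s_w}^2$ — for integer $s$ this is the relation between powers of $\mathcal A$ and the (Chebyshev-weighted) Sobolev norm, and for real $s\ge0$ it follows by interpolating between integer orders.

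For $\mu\ge1$ the same scheme applies but is considerably more delicate, and this is the step I expect to be the main obstacle. One has to estimate $\norm{(u-P_Nu)^{(j)}}_w$ for $1\le j\le\mu$ through the explicit (upper-triangular, parity-restricted) recursion expressing the Chebyshev coefficients of $u^{(j)}$ in terms of those of $u$, prove the bound first for \emph{integer} $\mu$, and then recover all real $\mu\in[0,s]$ by Hilbert-space interpolation. The genuine difficulty throughout is that the weight $w(x)=(1-x^2)^{-1/2}$ degenerates at $x=\pm1$, so crude termwise derivative bounds for $T_k$ are too lossy and the sharp rate $N^{\mu-s}$ survives only after the careful weighted estimates of~\cite{Canuto} — which is precisely why we quote the result rather than reprove it. Once the integer cases are established, the factor $N^{\mu-s}$ comes out exactly as in the $\mu=0$ display: on the tail $k>N$ write $(1+k^2)^{\mu}=(1+k^2)^{\mu-s}(1+k^2)^{s}$, bound the first factor by $(1+N^2)^{\mu-s}\le N^{2(\mu-s)}$ (permissible since $\mu-s\le0$), estimate the remaining $s$-weighted sum by $C\norm{u}_{H^s_w}^2$, and take square roots to obtain~\eqref{eq:sobolev}.
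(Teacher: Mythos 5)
The paper offers no proof of this lemma at all --- it is stated as a quotation from \cite{Canuto} --- so there is no internal argument to measure yours against; what follows assesses your sketch on its own terms. Your $\mu=0$ case is the standard argument and is essentially sound: $P_N$ is the truncation of the Chebyshev expansion, Parseval in $L^2_w$ applies, and the tail is damped by $(1+N^2)^{-s}$, provided one grants the one-sided comparison $\sum_k(1+k^2)^s|\tilde u_k|^2\norm{T_k}_w^2\le C\norm{u}_{H^s_w\left(\Omega\right)}^2$, which does hold but is itself one of the nontrivial lemmas of \cite{Canuto} rather than a definition of the norm.

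The genuine gap is in your closing paragraph, for $\mu\ge1$. The frequency split $(1+k^2)^{\mu}=(1+k^2)^{\mu-s}(1+k^2)^{s}$ controls only the spectrally weighted tail $\sum_{k>N}(1+k^2)^{\mu}|\tilde u_k|^2\norm{T_k}_w^2$, and for the Chebyshev weight this quantity does \emph{not} dominate $\norm{u-P_Nu}_{H^{\mu}_w\left(\Omega\right)}^2$: the norm equivalence you are implicitly invoking fails in precisely the direction you need. Concretely, $T_k'=kU_{k-1}$ with $\norm{U_{k-1}}_w^2\sim k$, so $\norm{T_k'}_w\sim k^{3/2}$, while the spectral weight credits the mode $T_k$ with only $(1+k^2)^{1/2}\norm{T_k}_w\sim k$; each tail mode costs an extra $k^{1/2}$ already at the first derivative. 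This loss is not an artifact of crude termwise bounds that ``careful weighted estimates'' can remove: the theorem actually proved in \cite{Canuto} for the $L^2_w$-orthogonal truncation measured in $H^{\mu}_w$, $\mu\ge1$, carries the exponent $2\mu-\tfrac{1}{2}-s$, not $\mu-s$; the optimal rate $\mu-s$ is attained by the best approximation in $H^{\mu}_w$ (equivalently by an $H^1_w$-orthogonal projector), not by $P_N$. So your outline, completed as described, would not deliver \eqref{eq:sobolev} for the operator $P_N$ defined in the paper --- and since the convergence theorem applies the lemma with $\mu=1$ to exactly this truncation, the mismatch between the quoted statement and the cited reference is worth flagging to the authors as well.
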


We can prove the following theorem.
\begin{theorem}
\label{th:convergence}
Let $s\ge 1$, and assume $u(x,t)\in X_s$ is the solution of the problem~\eqref{eq:nlsineG} with initial conditions $u_0$, $v\in H_w^s\left(\Omega\right)$, and $u^N(x,t)$ is the solution of the semi-discrete scheme~\eqref{eq:schemePN}. Then, there exists a positive constant $C = C(T)$, which does not depend on $N$, such that
\begin{equation}
\label{eq:order_conv}
\norm{u-u^N}_{X_1} \le C L(T) \left(\frac{1}{N}\right)^{s-1} \norm{u}_{X_s}.
\end{equation}
\end{theorem}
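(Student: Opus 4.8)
The plan is to derive an equation for the error $e^N = u - u^N$ and apply a Gr\"onwall-type argument in the energy norm. First I would write $u = P_N u + (u - P_N u)$ and note that, since $P_N$ commutes with $\partial_{tt}^2$, the projected exact solution satisfies $\ptt (P_N u) = P_N \mathcal{L}(u) - P_N \sin(u)$. Subtracting the scheme \eqref{eq:schemePN} gives, for $\eta^N := P_N u - u^N \in S_N$,
\[
\ptt \eta^N = P_N\bigl(\mathcal{L}(u) - \mathcal{L}(u^N)\bigr) - P_N\bigl(\sin(u) - \sin(u^N)\bigr).
\]
Because $\mathcal{L}$ is linear, $\mathcal{L}(u) - \mathcal{L}(u^N) = \mathcal{L}(e^N) = \mathcal{L}(u - P_N u) + \mathcal{L}(\eta^N)$, and the nonlinear term is handled by the Lipschitz bound $|\sin(u) - \sin(u^N)| \le |e^N|$. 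So the right-hand side splits into terms involving $\eta^N$ itself (to be absorbed by Gr\"onwall) and terms involving the projection error $u - P_N u$ (to be estimated by Lemma~\ref{lm:sobolev}).

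Next I would set up the energy estimate. Multiply the error equation by $\pt \eta^N$ in the $(\cdot,\cdot)_w$ inner product and integrate in time. The key structural fact is that $\mathcal{L}$ is (up to sign) a nonnegative self-adjoint operator on $L^2_w$: writing $\mathcal{L}u = \beta u - k\ast u$, one has $-(\mathcal{L}\eta^N, \eta^N)_w = \tfrac14 \int\!\!\int \frac{(\eta^N(x)-\eta^N(x'))^2}{|x-x'|^{1+2\alpha}}\,dx'dx \ge 0$, exactly the bilinear form appearing in the energy \eqref{eq:energy}; this lets me build the discrete analog of $E[\cdot]$ and control $\norm{\pt\eta^N}_w^2 + (\text{seminorm of }\eta^N)^2$. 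The term $(\mathcal{L}(u - P_N u), \pt\eta^N)_w$ is bounded using boundedness of $\mathcal{L}$ on $L^2_w$ (finiteness of $\beta$ and of $\norm{k}_{L^1}$, already noted in the text) together with $\norm{u - P_N u}_w \le C N^{-s}\norm{u}_{H^s_w}$ from Lemma~\ref{lm:sobolev}; similarly $\norm{P_N(\sin u - \sin u^N)}_w \le \norm{e^N}_w \le \norm{\eta^N}_w + C N^{-s}\norm{u}_{H^s_w}$. Collecting everything, one obtains
\[
\frac{d}{dt}\Bigl(\norm{\pt\eta^N}_w^2 + c\,|\eta^N|_{\alpha}^2\Bigr) \le C\Bigl(\norm{\pt\eta^N}_w^2 + \norm{\eta^N}_w^2\Bigr) + C\,N^{-2s}\norm{u}_{X_s}^2,
\]
and since $\norm{\eta^N(\cdot,t)}_w^2 \le \norm{\eta^N(\cdot,0)}_w^2 + T\int_0^t \norm{\pt\eta^N}_w^2$ with $\eta^N(\cdot,0) = 0$, Gr\"onwall's inequality yields $\norm{\eta^N}_{X_1}^2 \le C(T)\,N^{-2s}\norm{u}_{X_s}^2$ (the constant growing like $e^{CT}$, which I package into $L(T)$).

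Finally I would combine the two pieces via the triangle inequality $\norm{u - u^N}_{X_1} \le \norm{u - P_N u}_{X_1} + \norm{\eta^N}_{X_1}$. For the first term Lemma~\ref{lm:sobolev} with $\mu = 1$ gives $\norm{u - P_N u}_{H^1_w} \le C N^{1-s}\norm{u}_{H^s_w}$, and the same applied to $\pt u = u'$ (here using $s\ge 1$ so that $\pt u \in H^{s-1}_w$, or more simply invoking $u \in X_s$) gives the matching bound for the time-derivative part, so $\norm{u - P_N u}_{X_1} \le C N^{1-s}\norm{u}_{X_s}$. Adding the two contributions produces \eqref{eq:order_conv} with the stated dependence $(1/N)^{s-1}$.

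The main obstacle I anticipate is making the energy estimate fully rigorous near the boundary: the Neumann conditions \eqref{eq:bcdiscrete} enter through the $2\times2$ system \eqref{eq:bcsystem}, and when integrating by parts / testing against $\pt\eta^N$ one must check that the boundary contributions vanish or are controlled — in the Chebyshev (non-periodic) setting this is more delicate than in the Fourier case and is the step where the argument could hide a subtlety about whether $P_N$ interacts cleanly with the boundary constraint. A secondary technical point is confirming that $\mathcal{L}$ maps $H^s_w$ boundedly into itself (or at least into $L^2_w$) uniformly, which relies on the uniform boundedness of the truncated kernel $k$ already emphasized after \eqref{eq:model}; I would state this as a small lemma before the main estimate.
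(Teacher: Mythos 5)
Your overall architecture coincides with the paper's: split $u-u^N$ by the triangle inequality into the projection error $u-P_Nu$ (estimated with Lemma~\ref{lm:sobolev}, exactly as you do, for both $u$ and $\pt u$) and the discrete error $\eta^N=P_Nu-u^N$, which is controlled by testing the error equation against $\pt\eta^N\in S_N$, using the orthogonality~\eqref{eq:orthogonal} to remove the projectors, handling $\sin$ by its Lipschitz bound, and closing with Gr\"onwall. Up to notation this is the paper's proof.

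The one place you depart from it is the step you call the ``key structural fact,'' and that step fails. The identity $-(\mathcal{L}\eta^N,\eta^N)_w=\tfrac14\int\!\!\int\frac{(\eta^N(x)-\eta^N(x'))^2}{|x-x'|^{1+2\alpha}}\,dx'\,dx$ is obtained by symmetrizing the double integral under $x\leftrightarrow x'$, and that symmetrization is destroyed by the Chebyshev weight: the effective kernel $w(x)\,|x-x'|^{-1-2\alpha}$ is not symmetric in $(x,x')$ since $w(x)\ne w(x')$, so $\mathcal{L}$ is \emph{not} self-adjoint (let alone sign-definite) on $L^2_w(\Omega)$. There is also a sign issue: with the paper's convention $u(x)-u(x')$ in the numerator of~\eqref{eq:L}, the \emph{unweighted} form $(\mathcal{L}\eta,\eta)_{L^2}$ equals $+\tfrac12\int\!\!\int(\eta(x)-\eta(x'))^2|x-x'|^{-1-2\alpha}\,dx'dx$, so $-(\mathcal{L}\eta,\eta)$ is nonpositive, opposite to what you assert. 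Consequently the quantity $\norm{\pt\eta^N}_w^2+c\,|\eta^N|_\alpha^2$ is not what testing against $\pt\eta^N$ in $(\cdot,\cdot)_w$ produces, and $-(\mathcal{L}\eta^N,\pt\eta^N)_w$ is not a total time derivative. The repair is exactly what the paper does in~\eqref{eq:I2}: abandon coercivity altogether and estimate $(\mathcal{L}(u)-\mathcal{L}(u^N),\pt\eta^N)_w$ by brute force using only the assumed bound $k\in L^\infty(\Omega)$ together with Cauchy--Schwarz and Young's inequality, which yields $C(\norm{u-u^N}^2+\norm{\pt\eta^N}^2)$ and feeds directly into Gr\"onwall. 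With that substitution your argument closes and gives~\eqref{eq:order_conv}. Your two closing caveats are well taken --- the paper does not verify the interaction of $P_N$ with the discrete Neumann constraint~\eqref{eq:bcsystem}, nor does it isolate the $L^2_w$-boundedness of $\mathcal{L}$ as a separate lemma --- but neither is resolved there either.
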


\begin{proof}

Let $s\ge 1$. Triangular inequality gives us
\begin{equation}
\label{eq:triangular}
\norm{u-u^N}_{X_1} \le \norm{u-P_N u}_{X_1} + \norm{P_N u - u^N}_{X_1}.
\end{equation}
Let we first focus on $\norm{u-P_N u}_{X_1}$. Thanks to Lemma~\ref{lm:sobolev} we have
\[
\norm{(u-P_N u)(\cdot,t)}_{H^1_w\left(\Omega\right)} \le C N^{1-s}\norm{u(\cdot,t)}_{H_w^s\left(\Omega\right)},
\]
and
\[
\norm{\pt (u-P_N u)(\cdot,t)}_{H_w^1\left(\Omega\right)} \le C N^{1-s}\norm{\pt u(\cdot,t)}_{H_w^s\left(\Omega\right)}.
\]
Therefore,
\begin{equation}
\label{eq:1term}
\norm{u-P_N u}_{X_1} \le C N^{1-s}\norm{u}_{X_s}.
\end{equation}

We now estimate the term $\norm{P_N u - u^N}_{X_1}$. We fix $\varphi= \pt \left(P_N u - u^N\right) \in S_N$ as test function. Subtracting~\eqref{eq:schemePN} from~\eqref{eq:nlsineG} and taking the inner product with the test function $\varphi$, we get
\begin{equation}
\label{eq:difference}
\begin{split}
0=&\underbrace{\int_{\Omega} \left(\ptt u(x,t)-\ptt u^N(x,t)\right)\pt \left(P_N u(x,t) - u^N(x,t)\right)w(x)\,dx}_{{}=:I_1}\\
 &- \underbrace{\int_{\Omega}\left(\mathcal{L}(u(x,t))-P_N \mathcal{L}(u^N(x,t))\right)\pt \left(P_N u(x,t)- u^N(x,t)\right)w(x)\,dx}_{{}=:I_2}\\ 
&+ \underbrace{\int_{\Omega}\left(\sin\left(u(x,t)\right)-P_N \sin\left(u^N(x,t)\right)\right)\pt \left(P_N u(x,t) - u^N(x,t)\right)w(x)\,dx}_{{}=:I_3}.
\end{split}
\end{equation}

We focus on $I_1$. The orthogonal condition~\eqref{eq:orthogonal} implies that
\[
\int_{\Omega}\left(\ptt u(x,t) - P_N \ptt u(x,t)\right)\pt \left(P_N u(x,t)-u^N(x,t)\right)w(x)\,dx = 0.
\]

Thus,
\begin{equation}
\label{eq:I1}
\begin{split}
I_1 &= \int_{\Omega}\left(\ptt u(x,t) - P_N \ptt u(x,t)\right)\pt \left(P_N u(x,t)-u^N(x,t)\right)w(x)\,dx\\
& + \int_{\Omega} \left(P_N \ptt u(x,t)-\ptt u^N(x,t)\right)\pt \left(P_N u(x,t)-u^N(x,t)\right)w(x)\,dx\\ 
&= \frac{1}{2}\frac{d}{dt}\norm{\pt (P_N u - u^N)(\cdot,t)}^2_{H_w^1\left(\Omega\right)}.
\end{split}
\end{equation}

Thanks to~\eqref{eq:orthogonal}, we have
\[
\int_{\Omega}\left(\mathcal{L}(u^N(x,t))-P_N \mathcal{L}(u^N(x,t))\right)\pt \left(P_N u(x,t)-u^N(x,t)\right)w(x)\,dx =0.
\]

Thus, since $k\in L^{\infty}\left(\Omega\right)$, using the Cauchy's inequality, we have
\begin{equation}
\label{eq:I2}
\begin{split}
I_2 &=\int_{\Omega} \left(\mathcal{L}(u(x,t))-\mathcal{L}(u^N(x,t))\right)\pt \left(P_N u(x,t)-u^N(x,t)\right)w(x)\,dx\\
&=\int_{\Omega}\int_{\Omega_x}k(x' - x)\left(u(x',t)-u(x,t)\right)\pt \left(P_N u(x,t)-u^N(x,t)\right)w(x)\,d x' dx\\
&\quad -\int_{\Omega}\int_{\Omega_x}k(x' - x)\left(u^N(x',t) - u^N(x,t)\right)\pt \left(P_N u(x,t)-u^N(x,t)\right)w(x)\,d x' dx\\
&\le \int_{\Omega}\int_{\Omega_x}k(x' - x)\left|u(x',t)-u^N(x',t) \right|\pt \left(P_N u(x,t)-u^N(x,t)\right)w(x)\,d x' dx\\
&\quad + \int_{\Omega}\int_{\Omega_x}k(x' - x)\left|u(x,t)- u^N(x,t)\right|\pt \left(P_N u(x,t)-u^N(x,t)\right)w(x)\,d x' dx\\
&\le \left(\frac{\beta}{2}+\norm{k}_{L^{\infty}\left(\Omega\right)}\right)\left(\norm{\left(u-u^N\right)(\cdot,t)}^2_{H_w^1\left(\Omega\right)} + \norm{\pt (P_N u - u^N)(\cdot,t)}^2_{H_w^1\left(\Omega\right)}\right).
\end{split}
\end{equation}

Moreover, the orthogonality condition~\eqref{eq:orthogonal} ensures that
\[
\int_{\Omega}\left(\sin(u^N(x,t))-P_N \sin(u^N(x,t))\right)\pt \left(P_N u(x,t)-u^N(x,t)\right)w(x)\,dx =0.
\]
Therefore, due to the uniformly Lipschitzianity of the loading term and the Cauchy's inequality, we find
\begin{equation}
\label{eq:I3}
\begin{split}
I_3 &=\int_{\Omega} \left(\sin\left(u(x,t)\right)-P_N\sin\left(u^N(x,t)\right)\right)\pt \left(P_N u(x,t)-u^N(x,t)\right)w(x)\,dx\\
&=\int_{\Omega}\left(\sin\left(u(x,t)\right)-\sin\left(u^N(x,t)\right)\right)\pt \left(P_N u(x,t)-u^N(x,t)\right)w(x)\, dx\\
&\le \frac{1}{2}\left(\norm{\left(u-u^N\right)(\cdot,t)}^2_{H_w^1\left(\Omega\right)} + \norm{\pt (P_N u - u^N)(\cdot,t)}^2_{H_w^1\left(\Omega\right)}\right).
\end{split}
\end{equation}

We substitute~\eqref{eq:I1},~\eqref{eq:I2} and~\eqref{eq:I3} in~\eqref{eq:difference} and we obtain

\begin{equation}
\label{eq:substitution}
\frac{1}{2}\frac{d}{dt}\norm{\pt (P_N u - u^N)(\cdot,t)}^2_{H_w^1\left(\Omega\right)} \le C \norm{\left(u-u^N\right)(\cdot,t)}^2_{H_w^1\left(\Omega\right)} + C \norm{\pt \left(P_N u - u^N\right)(\cdot,t)}^2_{H_w^1\left(\Omega\right)}.
\end{equation}
We add to both sides of equation~\eqref{eq:substitution} the term
\[
\frac{1}{2}\frac{d}{dt}\norm{\left(P_N u - u^N\right)(\cdot,t)}^2_{H_w^1\left(\Omega\right)} =\int_{\Omega}\left(P_N u (x,t) - u^N(x,t)\right)\pt \left(P_N u (x,t) - u^N(x,t)\right)w(x)\,dx,
\]
then, we find
\begin{equation*}
\begin{split}
&\frac{d}{dt}\left(\norm{\pt \left(P_N u - u^N\right)(\cdot,t)}^2_{H_w^1\left(\Omega\right)} + \norm{\left(P_N u - u^N\right)(\cdot,t)}^2_{H_w^1\left(\Omega\right)}\right)\\
&\le C\left(\norm{\pt \left(P_N u - u^N\right)(\cdot,t)}^2_{H_w^1\left(\Omega\right)} + \norm{\left(P_N u - u^N\right)(\cdot,t)}^2_{H_w^1\left(\Omega\right)} + \norm{\left(u-P_N u\right)(\cdot,t)}^2_{H_w^1\left(\Omega\right)}\right).
\end{split}
\end{equation*}
Since $\norm{\pt \left(P_N u - u^N\right)(\cdot,0)}_{H_w^1\left(\Omega\right)} = 0$ and $\norm{\left(P_N u - u^N\right)(\cdot,0)}_{H_w^1\left(\Omega\right)} = 0$, we can apply Lemma~\ref{lm:sobolev} and Gronwall's inequality obtaining
\begin{align*}
\biggl(\norm{\pt \left(P_N u - u^N\right)(\cdot,t)}^2_{H_w^1\left(\Omega\right)} &+ \norm{\left(P_N u - u^N\right)(\cdot,t)}^2_{H_w^1\left(\Omega\right)}\biggr)\\
&\le\int_0^t e^{C(t-\tau)}\norm{\left(u-P_N u\right)(\cdot,\tau)}^2_{H_w^1\left(\Omega\right)}\,d\tau\\
&\le C(T)N^{2-2s}\int_0^t\norm{u(\cdot,\tau)}^2_{H_w^1\left(\Omega\right)}\,d\tau.
\end{align*}
Hence,
\begin{equation}
\label{eq:2term}
\norm{P_N u - u^N}_{X_1}\le C(T)N^{1-s}\norm{u}_{X_s}.
\end{equation}
Finally, using~\eqref{eq:1term} and~\eqref{eq:2term} in~\eqref{eq:triangular}, we get the claim.

\end{proof}

\section{The fully discrete scheme}
\label{sec:fully}

The semi-discrete nonlocal formulation of Sine-Gordon equation in~\eqref{eq:semidiscrete} can be integrated in time by using explicit forward and backward difference techniques. {St{\"o}rmer-Verlet method consists in a symplectic semi-implicit centered second-order finite difference scheme deeply used in the context of continuum mechanics and elastodynamics.} 

Let {$N_T>0$} be the total number of time steps and 
$\{t_s\}_{s=0}^{N_T}$ be a uniform partition of the computational interval $[0,T]$, with $t_s=s\Delta t$ and $\Delta t=\frac{T}{N_T}$.

If we denote by
{
\begin{equation}
\label{eq:Ldisc}
\mathcal{L}\left(u^N(\cdot,t)\right)(x_h) = \beta u^N(x_h,t) - \mathcal{F}^{-1}\left(\mathcal{F}(k)\mathcal{F}\left(u^N\right)\right)(x_h,t)
\end{equation}}
the spectral discretization of the nonlocal operator $\mathcal{L}$ at the collocation points $x_h$, $h=0,\dots, N$, then the semi-discrete scheme~\eqref{eq:semidiscrete} can be compactly written as
\begin{equation}
\partial_{tt}^2 u^N(x_h,t) = \mathcal{L}\left(u^N(x_h,t)\right) -\sin\left(u^N(x_h,t)\right),\quad h=0,\dots,N.
\label{eq:semidiscompact}
\end{equation}

We set $U^{N,s}_h = u^N(x_h,t_s)$ the approximation of the solution at the node $x_h$ and at the discrete time $t_s$, for $h=0,\dots,N$ and $s=0,\dots,N_T$ and we denote $V^{N,s}_h=\left(U^{N,s}_h\right)'$, then the St\"ormer-Verlet scheme for the system~\eqref{eq:semidiscompact} is given by
\begin{equation}
\label{eq:fullyscheme}
\begin{cases}
U^{N,s+1}_h=U^{N,s}_h + \Delta t\left(V^{N,s}_h +\frac{\Delta t}{2} \mathcal{L}\left(U^{N,s}_h\right)-\frac{\Delta t}{2}\sin(U^{N,s}_h)\right),\\
V^{N,s+1}_h=V^{N,s}_h+\frac{\Delta t}{2}\left(\mathcal{L}\left(U^{N,s}_h\right)-\sin\left(U^{N,s}_h\right)+\mathcal{L}\left(U^{N,s+1}_h\right)-\sin\left(U^{N,s+1}_h\right)\right),
\end{cases}
\end{equation}
for $h=0,\dots,N$ and $s=0,\dots,N_T$.

\section{Numerical experiments}
\label{sec:numerics}

In this section we perform several simulations in order to study the properties of the solution of the model. The first test provides a validation of the spectral method by making a comparison between the solution obtained by Chebyshev method with the solution obtained by implementing a centered second order finite difference scheme. Then we show the dispersive effects due to the nonlocality in the profile of soliton-type solutions. {Finally, we point out the energy behavior of the fully-discrete scheme by showing that, despite the continuous energy functional is preserved over time, the proposed fully-discrete scheme can be considered \textit{almost-energy preserving}, in the sense that after certain initial oscillations, the energy functional recovers an averaged constant value.}

\subsection{Test 1: Validation of the performance of the spectral method}
\label{sec:validation}

In order to show that the spectral method approximates the correct solution, we make a comparison between the solution provided by the proposed Chebyshev method and the solution found by the implementation of a finite difference discretization. {The latter corresponding to a centered second-ordered scheme in time explicitly combined with the trapezoidal rule approximation of the operator $\mathcal{L} u(x,t)$.} We choose $u_{0,1}(x)=0$ and $v_{0,1}(x)=4 \left(\sqrt{1-c^2}\cosh(x/\sqrt{1-c^2})\right)^{-1}$ as initial conditions, with c=.999. {We fix $\alpha=0.4$ as parameter of the integral operator $\mathcal{L}$ defined in~\eqref{eq:1} and $\delta=0.2$ as horizon.} In Figure~\ref{fig:comp-kinkantikink}, we can observe a good agreement between the two numerical solutions.

\begin{figure}%
\centering
\includegraphics[width=.5\textwidth]{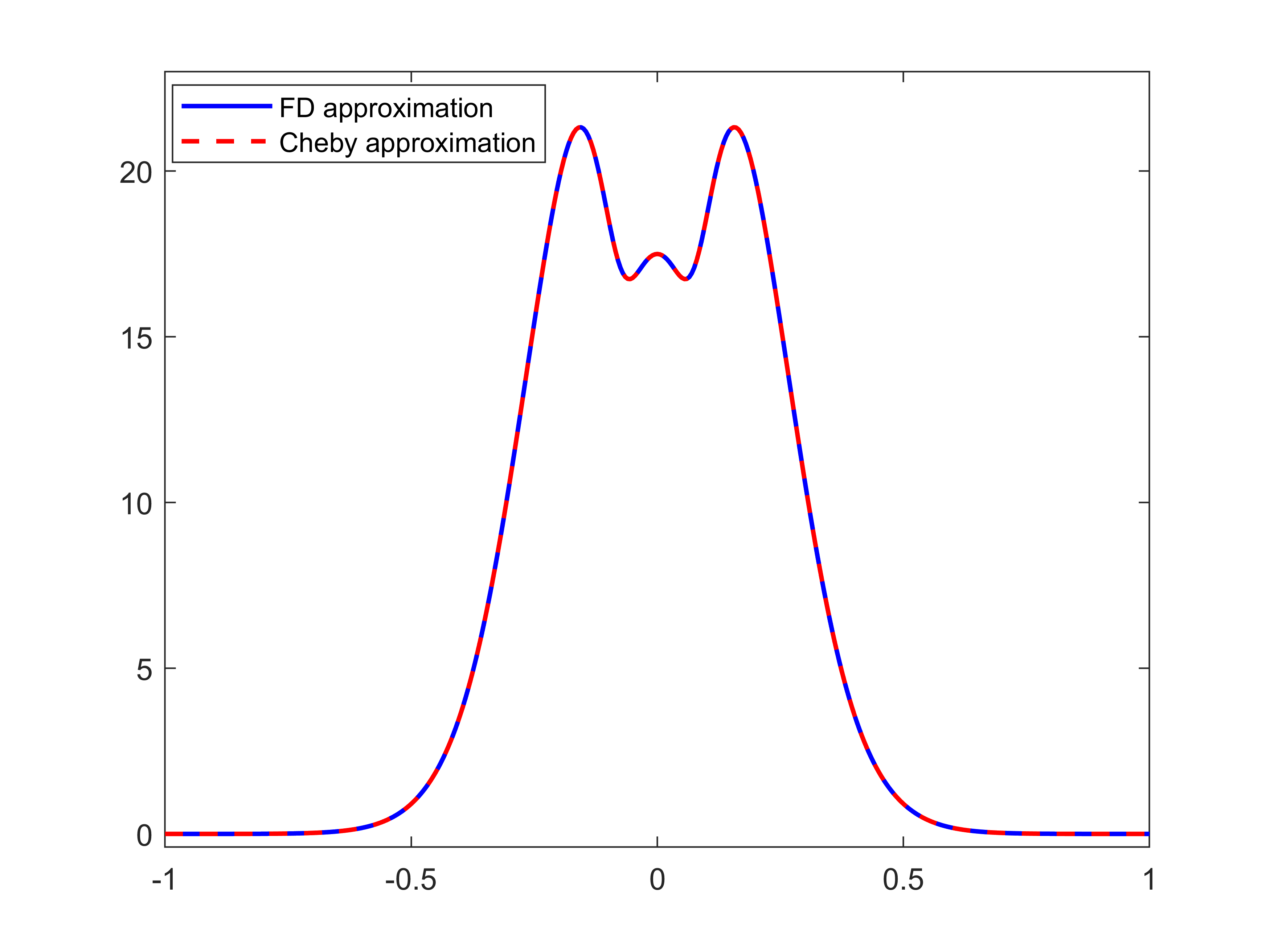}%
\caption{With reference to Section~\ref{sec:validation}, the comparison between the approximated solution computed by spectral method with the one obtained by finite different discretization at time $t=1$ with initial conditions $u_{0,1}(x)$ and $v_{0,1}(x)$. The parameters for the simulation are $N=400$, $N_T=800$, the horizon is $\delta=0.2$ and $\alpha=0.4$.}%
\label{fig:comp-kinkantikink}%
\end{figure}

In Figure~\ref{fig:compkink_t1} we show the comparison between the solution obtained by spectral method and finite difference method corresponding to the initial conditions $u_{0,2}(x)=4\arctan{\left(e^{\frac{x}{\sqrt{1-c^2}}}\right)}$ and $v_{0,2}(x)=-2c\frac{\mbox{sech}\left(\frac{x}{\sqrt{1-c^2}}\right)}{\sqrt{1-c^2}}$, with $c=0.999$.

\begin{figure}%
\centering
\includegraphics[width=.5\textwidth]{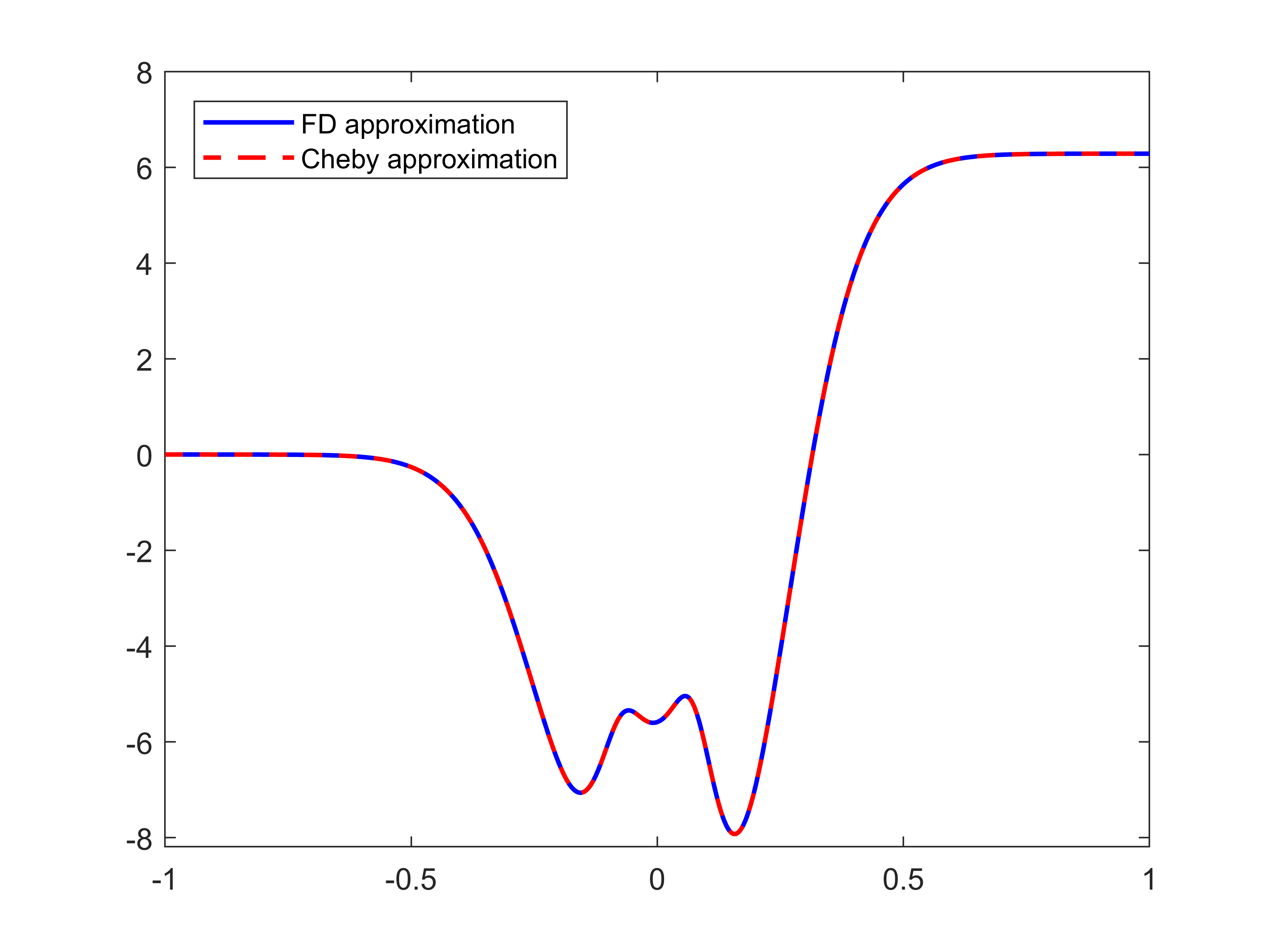}%
\caption{With reference to Section~\ref{sec:validation}, the comparison between the approximated solution computed by spectral method with the one obtained by finite different discretization at time $t=1$ with initial conditions $u_{0,2}(x)$ and $v_{0,2}(x)$. The parameters for the simulation are $N=400$, $\Delta t=8/N_T$, $N_T=800$ $\delta=0.2$ and $\alpha=0.4$.}%
\label{fig:compkink_t1}%
\end{figure}

With reference to the solution corresponding to the initial conditions $u_{0,2}(x)$ and $v_{0,2}(x)$, we also provide a convergence analysis of the proposed spectral scheme. We define the relative $L^2$-error as follows
\begin{equation*}
Error_2\left[u^N\right](t)=\frac{\sum_{h=1}^N\left|u^N(x_h,t)-u^\ast(x_h,t)\right|^2}{\sum_{h=1}^N\left|u^\ast(x_h,t)\right|^2},
\end{equation*}
where $u^\ast$ represents the reference solution obtained by using the finite difference scheme with $N=1600$. Starting from the relative $L^2$-error, we can compute the convergence rate by looking for the slope of the line that best fit in the sense of least square the logarithm of the data.

Table~\ref{tab:convrate} and Figure~\ref{fig:error_slope} depict the relative $L^2$-error and the convergence rate obtained by using the finite difference scheme and the proposed spectral method as the total number of spatial discretization points increases.

\begin{table}%
\centering%
\renewcommand\arraystretch{1.3}
\begin{tabular}{ccccc
}
\hline
\multirow{2}*{$N$}& \multicolumn{2}{c}{Finite Difference method}&\multicolumn{2}{c}{Chebyshev method}  \\ \cline{2-5}
& error & conv. rate & error&conv. rate\\ \hline

$100$&$2.1336 \times 10^{-3}$&$-$&$1.3600 \times 10^{-3}$&$-$
\\
$200$&$4.7141 \times 10^{-4}$&$2.1625$&$1.9584\times 10^{-4}$&$2
7757$
\\
$400$&$1.0768\times 10^{-4}$&$2.1426$&$7.1261\times 10^{-6}$&$3.7670$
\\
$800$&$1.8644\times 10^{-5}$&$2.2551$&$3.8717\times 10^{-7}$&$3.9945$
\\
\hline
\end{tabular}
\renewcommand\arraystretch{1}
\caption{With reference to Section~\ref{sec:validation}, the relative error related to the initial conditions $u_{0,2}(x)$ and $v_{0,2}(x)$, at time $t=2$ as function of the total number of discretization points.}
\label{tab:convrate}
\end{table}

\begin{figure}%
\centering
\includegraphics[width=.5\textwidth]{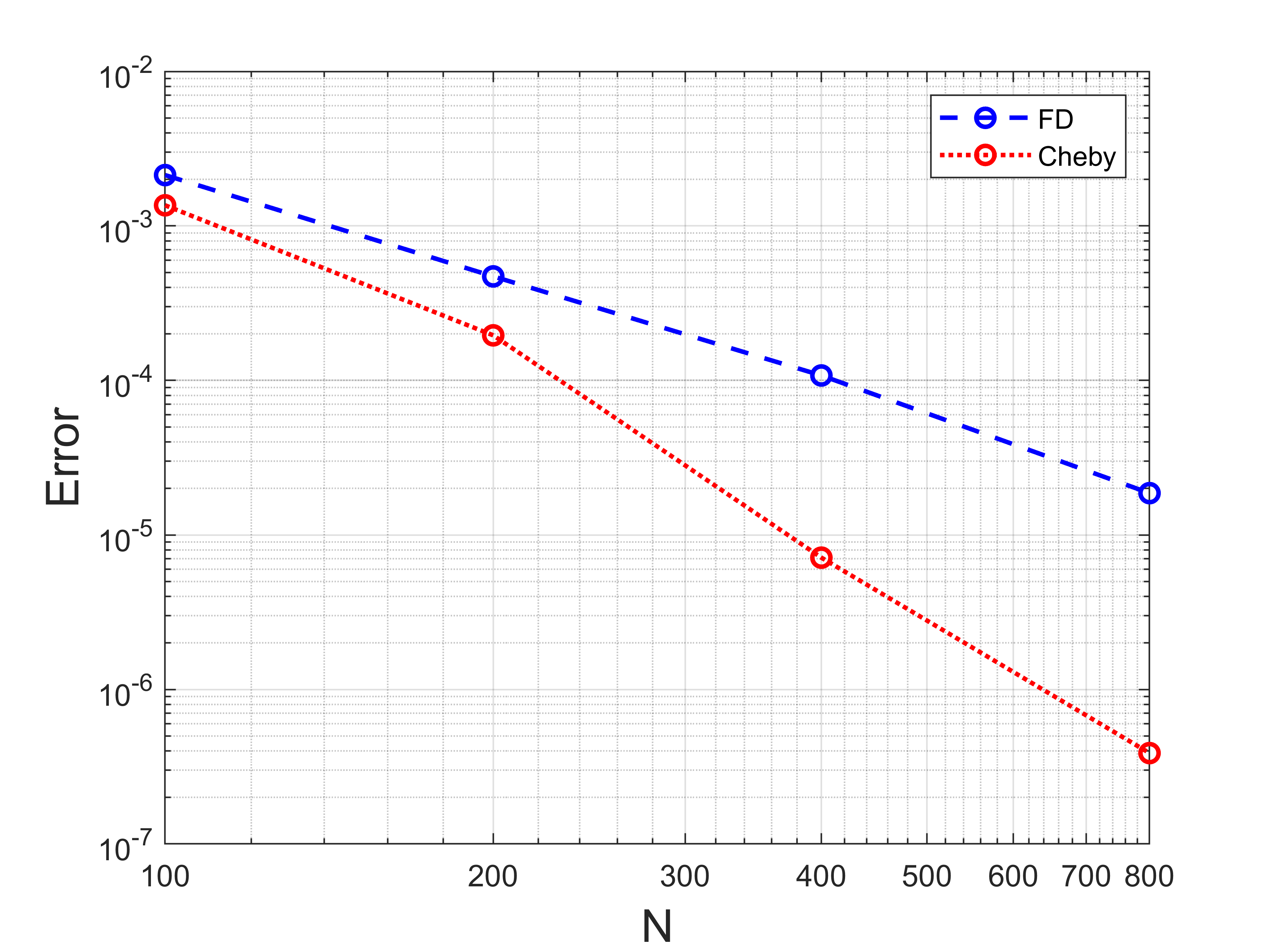}%
\caption{With reference to Section~\ref{sec:validation}, the comparison between the relative error obtained with the two considered methods in the logaritmic scale.}%
\label{fig:error_slope}%
\end{figure}

We can observe a gain in terms of convergence rate when a spectral method is applied.

\subsection{Test 2: The dispersive effects of the nonlocal model in soliton-type solutions}
\label{sec:dispersive}

Due to the presence of long-range interactions, solutions of~\eqref{eq:nlsineG} are characterized by a dispersive behavior. As a consequence, soliton-type solutions loose the property to be travelling waves with constant velocity and shape-preserving profile and show the appearance of an oscillatory behavior, whose phase depends on the size of the horizon.

The nonlocal kink solution is compared with the classical one in Figure~\ref{fig:kink}, where we can observe the evolution of the solution subject to the following initial conditions
\begin{align*}
u_0(x)&=4\arctan{\left(e^{\frac{x}{\sqrt{1-c^2}}}\right)},\\
v_0(x)&=-2c\frac{\mbox{sech}\left(\frac{x}{\sqrt{1-c^2}}\right)}{\sqrt{1-c^2}},
\end{align*}
with velocity $c=.999$.
\begin{figure}
\centering
\begin{subfigure}[b]{.48\textwidth}
\includegraphics[width=\textwidth]{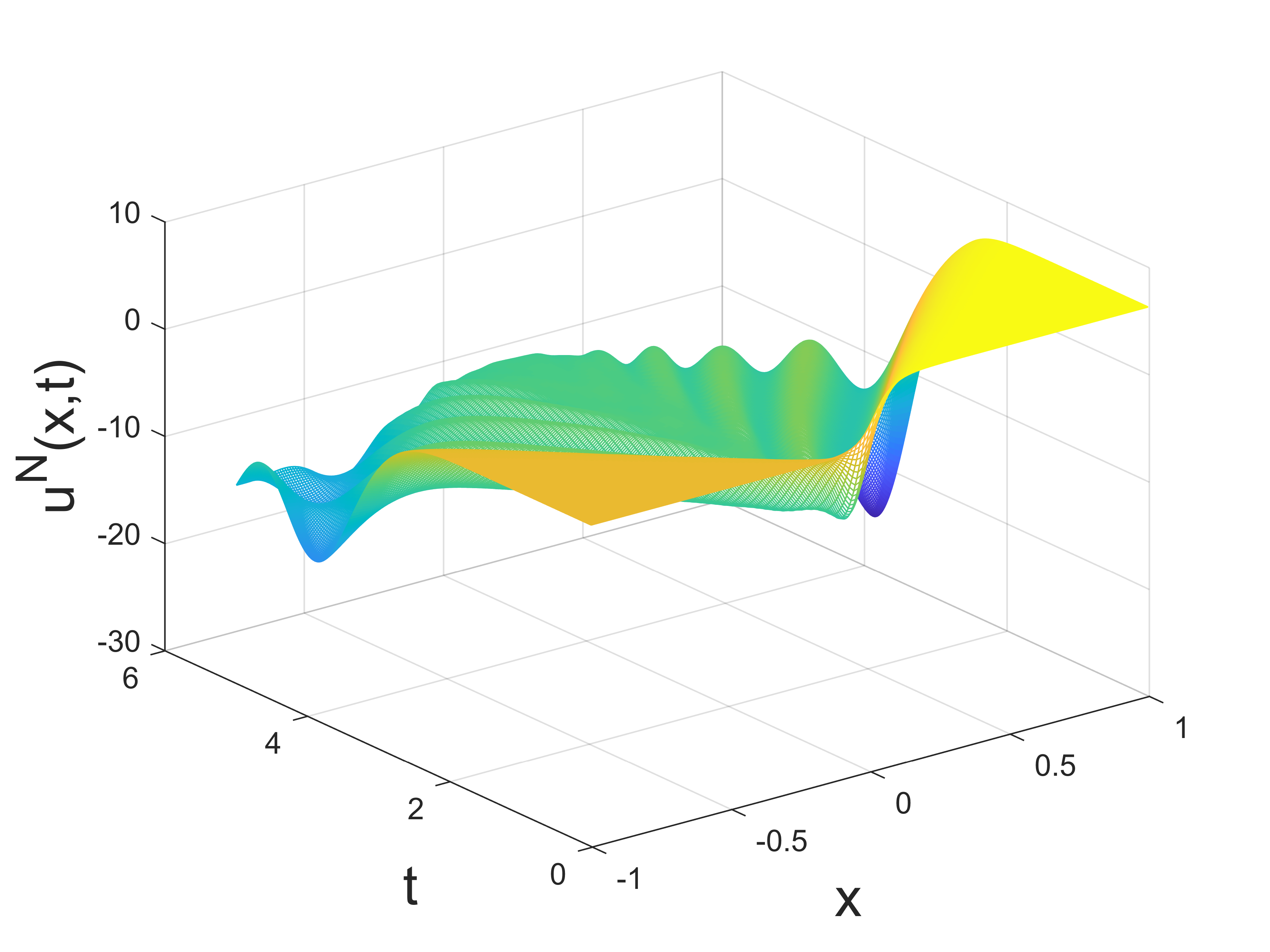}
\caption*{The evolution of a kink soliton in the nonlocal model.}
\end{subfigure}
\begin{subfigure}[b]{.48\textwidth}
\includegraphics[width=\textwidth]{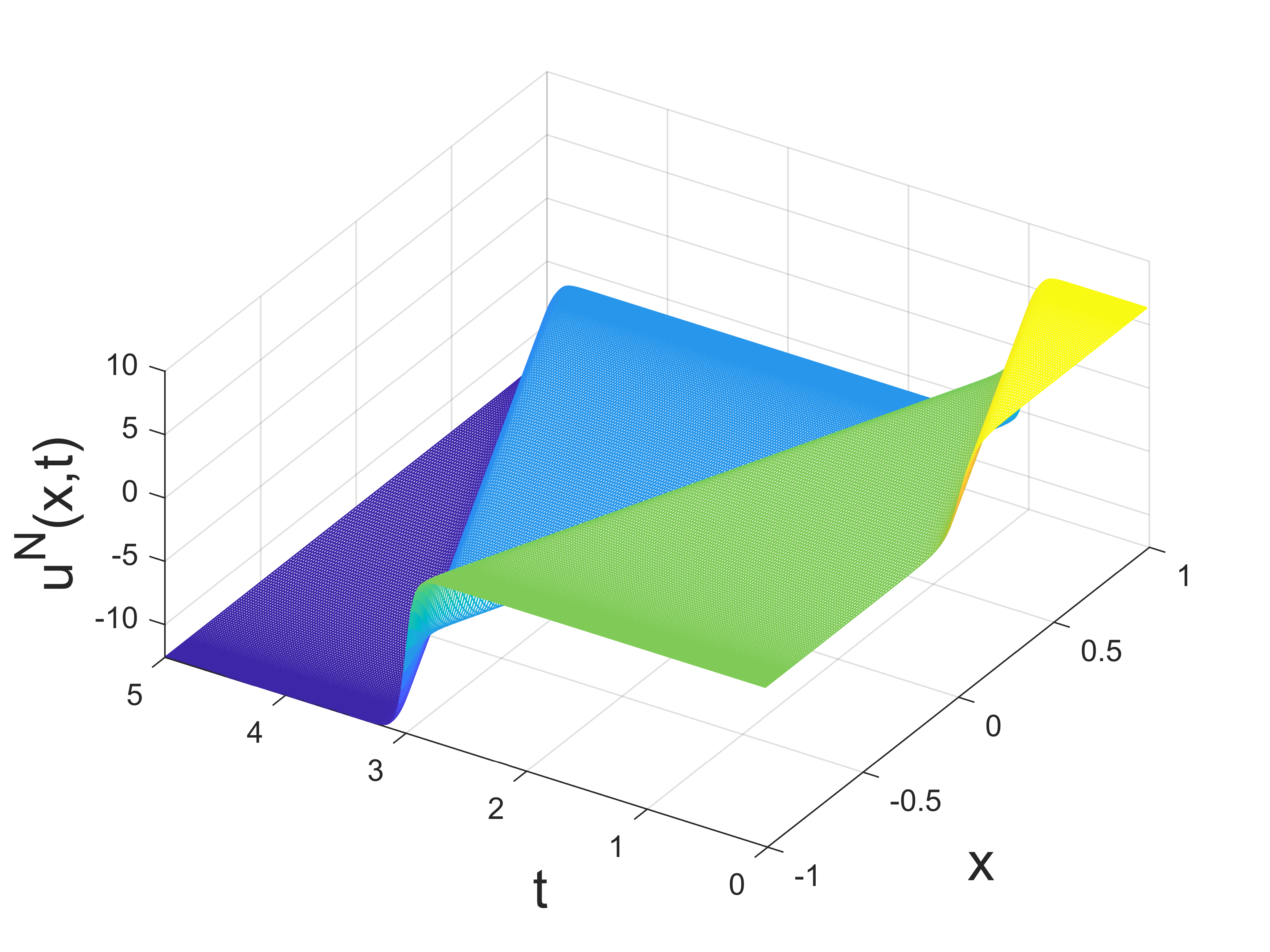}
\caption*{The evolution of a kink soliton in the classical SineGordon equation.}
\end{subfigure}
\caption{With reference to Section~\ref{sec:dispersive}: the comparison between kink-type solitons in the nonlocal model and in the classical Sine-Gordon model. The parameters for the simulation are $\delta=0.2$, $N=200$, $\alpha=0.4$, and $N_T=400$.}
\label{fig:kink}
\end{figure}

A comparison between antikink solutions is shown in Figure~\ref{fig:antikink}. The initial conditions for this test are
\begin{align*}
u_0(x)&=4\arctan{\left(e^{\frac{-x}{\sqrt{1-c^2}}}\right)},\\
v_0(x)&=-2c\frac{\mbox{sech}\left(\frac{x}{\sqrt{1-c^2}}\right)}{\sqrt{1-c^2}},
\end{align*}
with $c=0.999$.
\begin{figure}
\centering
\begin{subfigure}[b]{.48\textwidth}
\includegraphics[width=\textwidth]{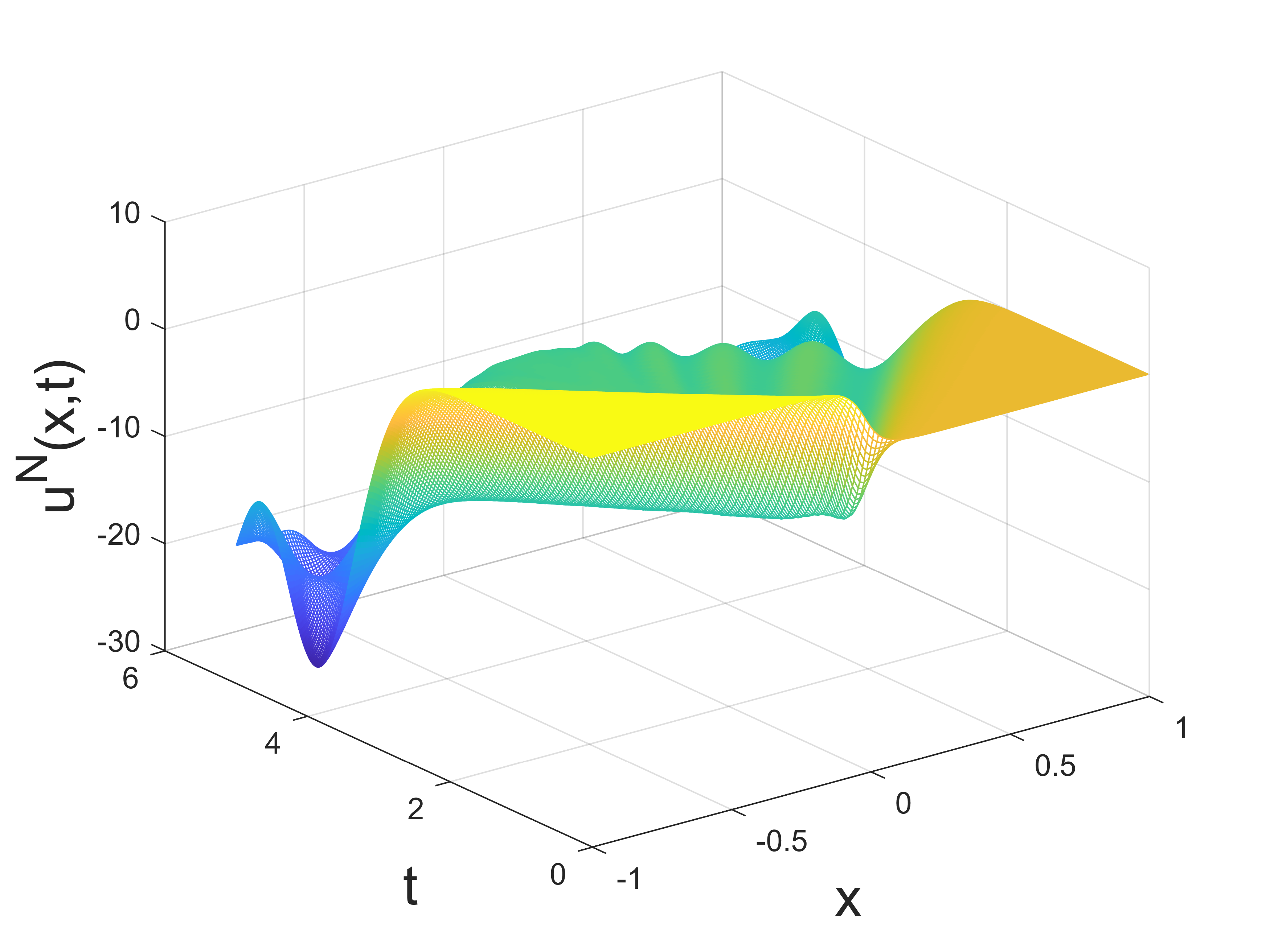}
\caption*{The evolution of a antikink soliton in the nonlocal model.}
\end{subfigure}
\begin{subfigure}[b]{.48\textwidth}
\includegraphics[width=\textwidth]{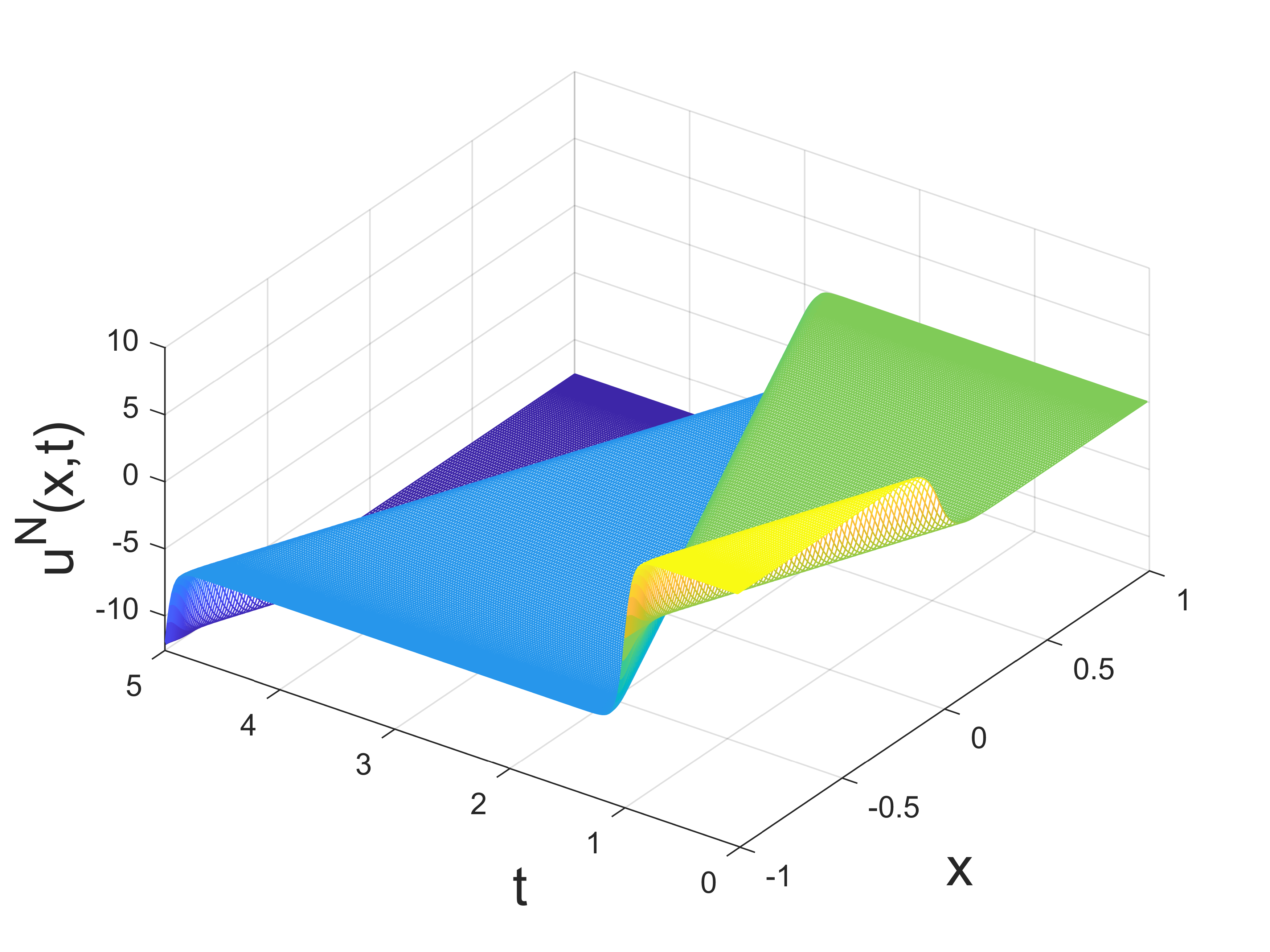}
\caption*{The evolution of a antikink soliton in the classical Sine-Gordon equation.}
\end{subfigure}
\caption{With reference to Section~\ref{sec:dispersive}: the comparison between antikink-type solitons in the nonlocal model and in the classical Sine-Gordon model. The parameters for the simulation are $\delta=0.2$, $N=200$, $\alpha=0.4$, and $\Delta t=8/N_T$, $N_T=400$.}
\label{fig:antikink}
\end{figure}

Figures~\ref{fig:kinkkink} and~\ref{fig:kinkantikink} provide a comparison between local and nonlocal models in the case of a collision between two kink-type solitons or a collision between a kink and an antikink soliton, respectively. The initial conditions for these simulations are respectively
\begin{align*}
u_0(x)&=4\arctan{\left(c \sinh{\frac{x}{\sqrt{1-c^2}}}\right)},\\
v_0(x)&=0,
\end{align*}
and
\begin{align*}
u_0(x)&=0,\\
v_0(x)&=\frac{4}{\sqrt{1-c^2}\cosh{\left(\frac{x}{\sqrt{1-c^2}}\right)}},
\end{align*}
with $c=0.999$.
\begin{figure}
\centering
\begin{subfigure}[b]{.48\textwidth}
\includegraphics[width=\textwidth]{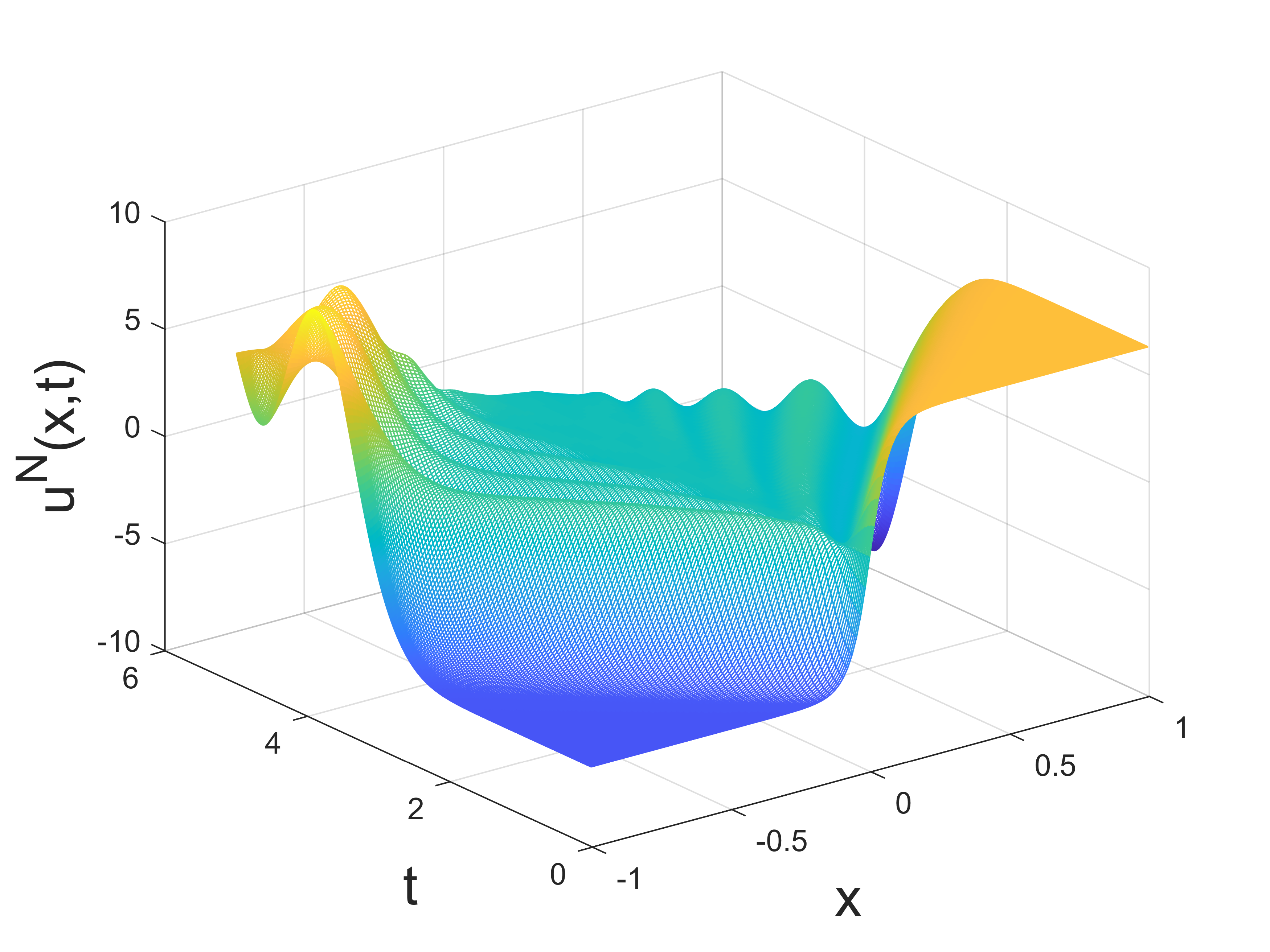}
\caption*{The evolution of a collision of two kink solitons in the nonlocal model.}
\end{subfigure}
\begin{subfigure}[b]{.48\textwidth}
\includegraphics[width=\textwidth]{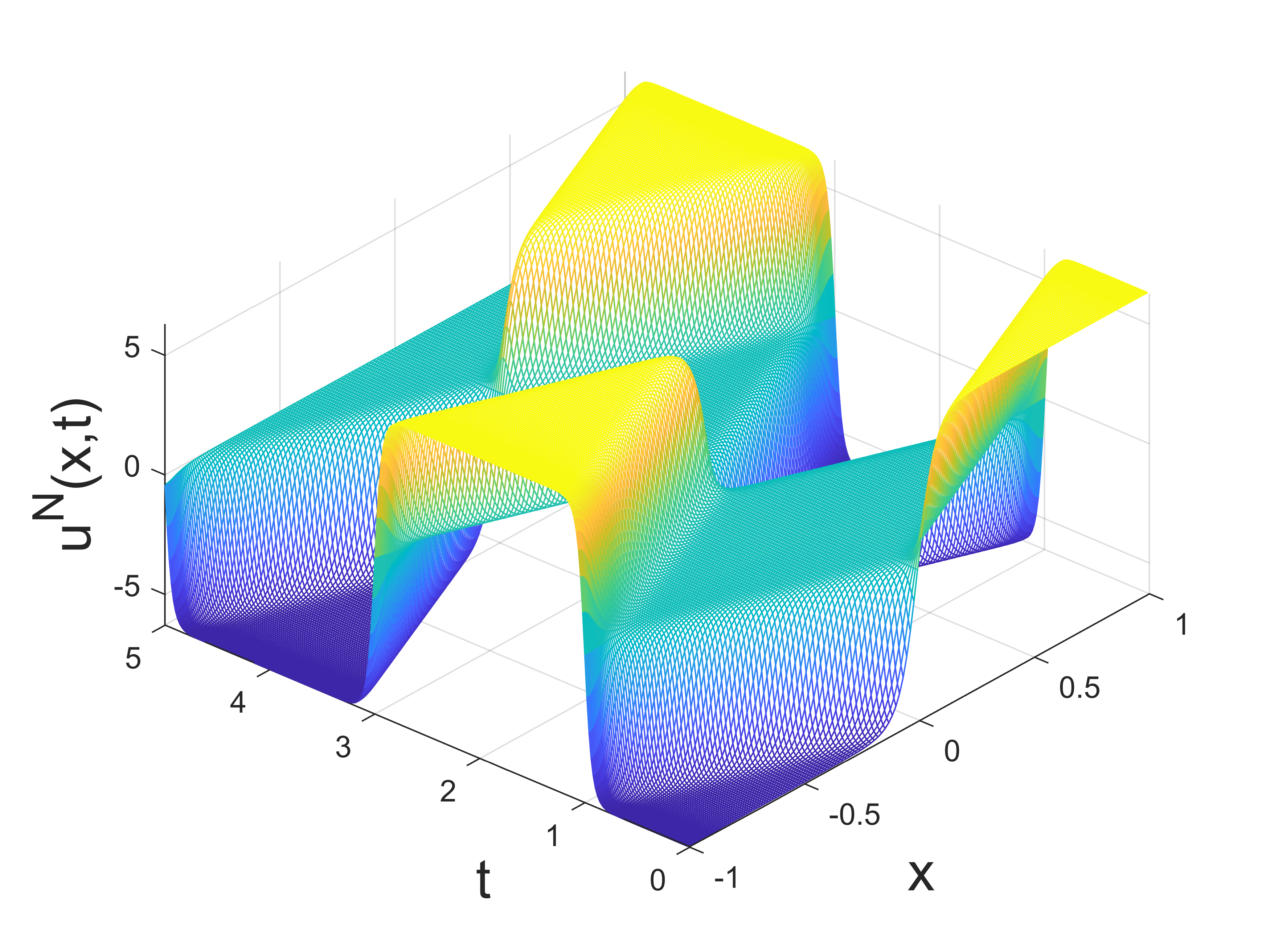}
\caption*{The evolution of a collision of two kink solitons in the classical Sine-Gordon equation.}
\end{subfigure}
\caption{With reference to Section~\ref{sec:dispersive}: the comparison between the behavior of a collision of two kink-type solitons in the nonlocal model and in the classical Sine-Gordon model. The parameters for the simulation are $\delta=0.2$, $N=200$, $\alpha=0.4$, and $\Delta t=8/N_T$, $N_T=400$.}
\label{fig:kinkkink}
\end{figure}

\begin{figure}
\centering
\begin{subfigure}[b]{.48\textwidth}
\includegraphics[width=\textwidth]{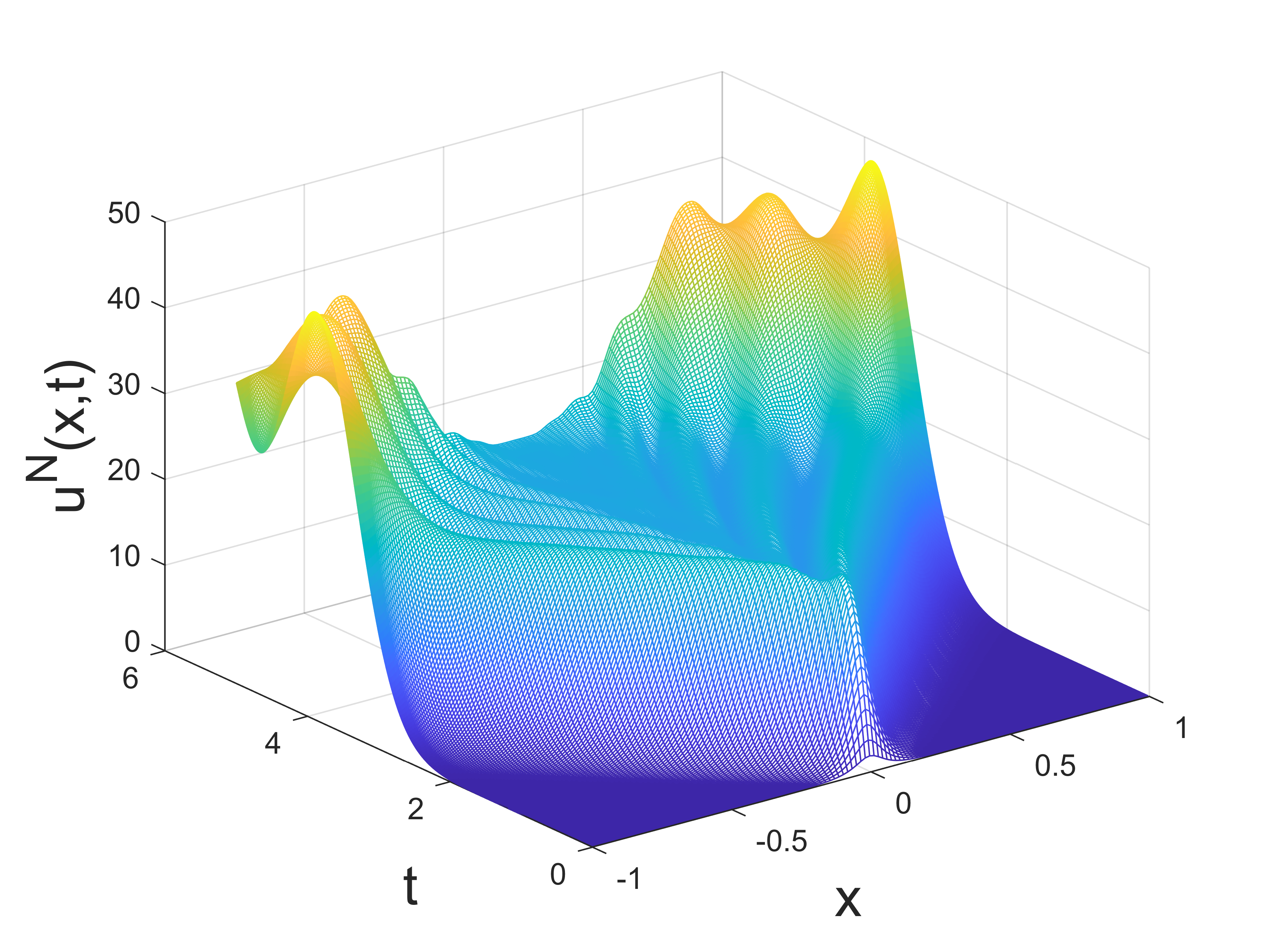}
\caption*{The evolution of a collision of a kink soliton with an antikink soliton in the nonlocal model.}
\end{subfigure}
\begin{subfigure}[b]{.48\textwidth}
\includegraphics[width=\textwidth]{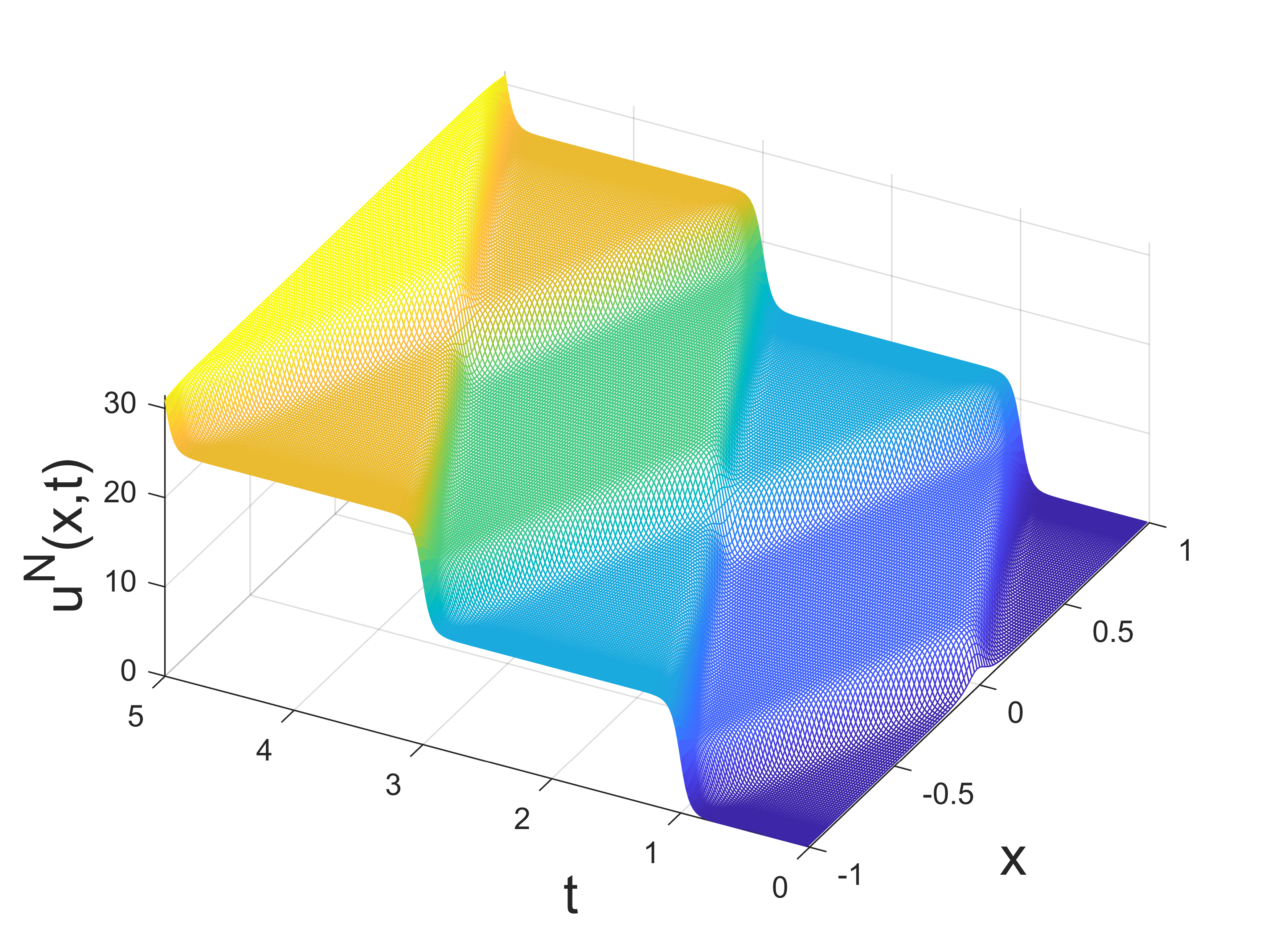}
\caption*{The evolution of a collision of a kink soliton with an antikink soliton in the classical Sine-Gordon equation.}
\end{subfigure}
\caption{With reference to Section~\ref{sec:dispersive}: the comparison between the behavior of a collision of kink-type soliton with an antikink-type soliton in the nonlocal model and in the classical Sine-Gordon model. The parameters for the simulation are $\delta=0.2$, $N=200$, $\alpha=0.4$, and $\Delta t=8/N_T$, $N_T=400$.}
\label{fig:kinkantikink}
\end{figure}

Finally, the same dispersive effects can be observed even in the behavior of a breather solution as shown in Figure~\ref{fig:breather}. In this case the initial conditions for the simulation are
\begin{gather*}
u_0(x)=4\arctan{\left(\frac{\sqrt{1-w^2}\sin\left(-\frac{c w x}{\sqrt{1-c^2}}\right)}{w\cosh\left(\frac{x\sqrt{1-w^2}}{\sqrt{1-c^2}}\right)}\right)},\\
v_0(x)=4w \frac{\sqrt{1-w^2}}{\sqrt{1-c^2}} \left(\frac{w\cos\left(-\frac{cwx}{\sqrt{1-c^2}}\cosh\left(\frac{x\sqrt{1-w^2}}{\sqrt{1-c^2}}\right)+c\sqrt{1-w^2}\sin\left(-\frac{cwx}{\sqrt{1-c^2}}\right)\sinh\left(\frac{x\sqrt{1-w^2}}{\sqrt{1-c^2}}\right)\right)}{w^2\cosh^2\left(\frac{x\sqrt{1-w^2}}{\sqrt{1-c^2}}\right)+(1-w^2)\sin^2\left(-\frac{cwx}{\sqrt{1-c^2}}\right)}\right),
\end{gather*}
with $c=0.999$ and $w=0.4$.
\begin{figure}
\centering
\begin{subfigure}[b]{.48\textwidth}
\includegraphics[width=\textwidth]{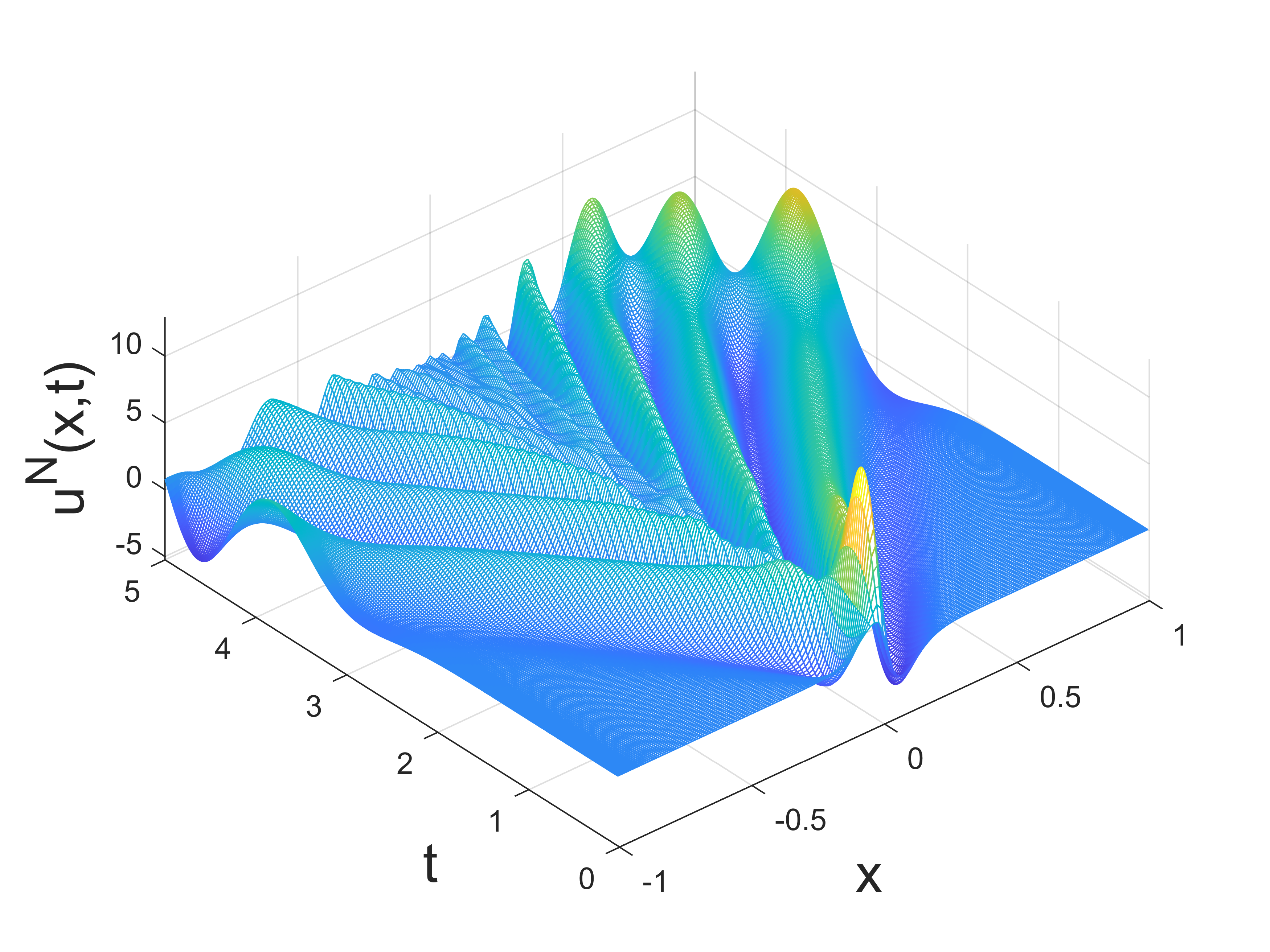}
\caption*{The evolution of a breather soliton in the nonlocal model.}
\end{subfigure}
\begin{subfigure}[b]{.48\textwidth}
\includegraphics[width=\textwidth]{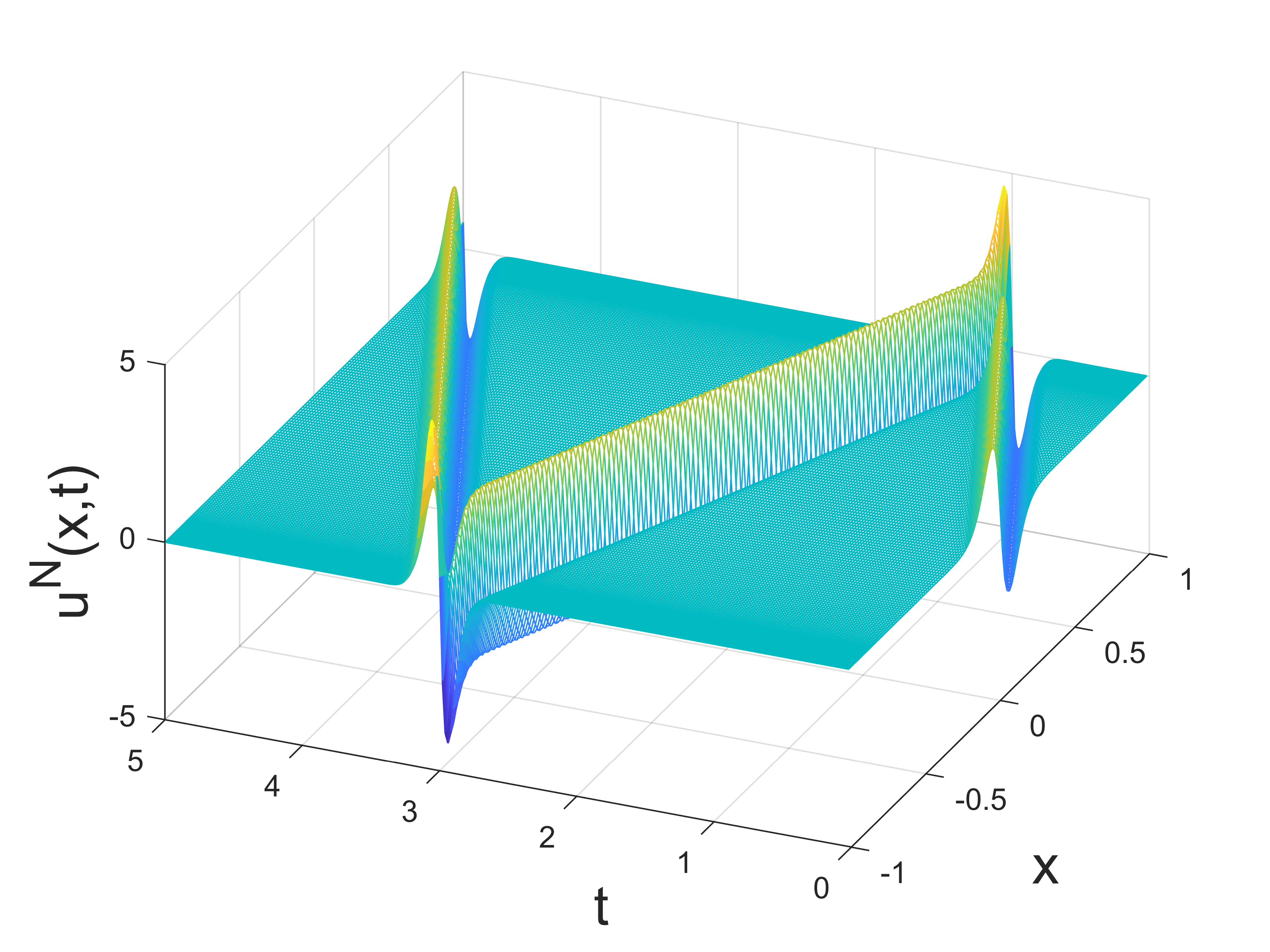}
\caption*{The evolution of a breather soliton in the classical Sine-Gordon equation.}
\end{subfigure}
\caption{With reference to Section~\ref{sec:dispersive}: the comparison between the behavior of a breather-type soliton in the nonlocal model and in the classical Sine-Gordon model. The parameters for the simulation are $\delta=0.2$, $N=200$, $\alpha=0.4$, and $\Delta t=8/N_T$, $N_T=400$.}
\label{fig:breather}
\end{figure}
In all shown cases, due to the dispersive effects, we can also observe a time delay in the soliton waves to reach the boundaries with respect to the local soliton waves.

\subsection{{Test 3: The energy behavior of the fully-discrete scheme}}
\label{sec:energy}

{The aim of the test is to check the behavior of the discrete energy functional associated to the proposed fully-discrete scheme. We take $u_0(x)=e^{-x^2/.002}$, $v_0(x)=0$, as initial conditions and we fix the parameters of the model as $\delta=0.2$, $\alpha=0.4$ and $N=800$ and $\Delta t=10^{-4}$. In Figure~\ref{fig:energy}, we show the behavior of the distribution in time of $E(t)/E(0)$. It is clearly not constant, but oscillates around 1. Due to this, we refer to the proposed model as \textit{almost-energy preserving} in time, in the sense that, although there are oscillations in the behavior of the discretized energy, their amplitude is such as to maintain the oscillations around one with an error of one percent except for the first instants of time. The reasons of such behavior need to be further investigated: it could be related to the fact that even if St\"ormer-Verlet scheme is a symplectic semi-implicit second-order integrator, it does not allow to maintain, in general, the energy-preserving property when applied to nonlinear problems, as shown for instance in~\cite{bilbao2023,hairer2003}.}

\begin{figure}%
\centering
\includegraphics[width=.5\textwidth]{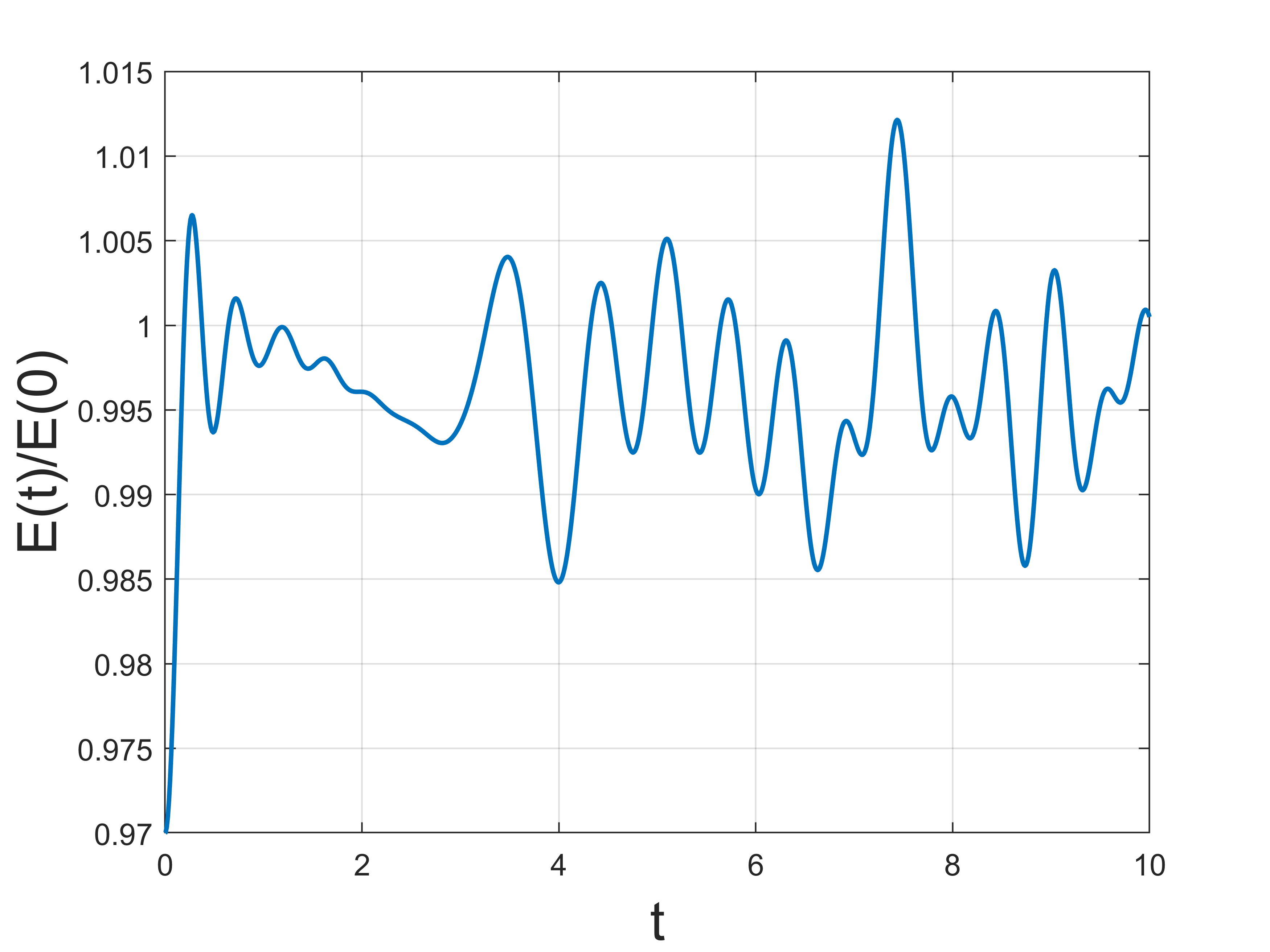}%
\caption{{With reference to Section~\ref{sec:energy}, the behavior of the energy functional}.}%
\label{fig:energy}%
\end{figure}

\section*{Conclusive Remarks} 

A spectral method based on the Chebyshev nodes spatial discretization for the Cauchy problem related to the evolution of the peridynamic Sine-Gordon equation has been studied in this work. The consistency of such discretization has been rigorously proved and various numerical experiments have clarified strength and limitations of the proposed model. Specifically, the proposed model has been validated against a second order centered finite-difference scheme by comparing the evolution in time of several nonlocal soliton-type solutions. {Lastly, dispersive effects as well the behavior of the energy functional of the specific peridynamic kernel have been experimentally showed.}

\section*{Acknowledgments} {The authors thank the anonymous referees for their helpful and constructive remarks that have contributed to significantly improve the original manuscript.} AC, LL, and SFP are members of Gruppo Nazionale per il Calcolo Scientifico (GNCS) of the Istituto Nazionale di Alta Matematica (INdAM).
This work was partially supported by: 
\begin{itemize}
\item Research Project of National Relevance ``Evolution problems involving interacting scales'' granted by the Italian Ministry of Education, University and Research (MUR Prin 2022, project code 2022M9BKBC, Grant No. CUP  D53D23005880006);
\item Research Project of National Relevance ``Mathematical Modeling of Biodiversity in the Mediterranean sea: from bacteria to predators, from meadows to currents'' granted by the Italian Ministry of University and Research (MUR) under the National Recovery and Resilience Plan (NRRP) funded by the European Union - NextGenerationEU (MUR Prin PNRR 2023, project code P202254HT8, Grant No. CUP B53D23027760001);
\item PNRR MUR - M4C2 Project, grant number N00000013 - CUP D93C22000430001;
\item the INdAM-GNCS Project CUP - E53C23001670001.
\item PRIN2022PNRR n. verb+P2022M7JZW+\textit{``SAFER MESH - Sustainable mAnagement oF watEr Resources ModEls and numerical MetHods''} research grant,
funded by the Italian Ministry of Universities and Research (MUR) and  by the European Union through Next Generation EU, M4C2.
\end{itemize}


\end{document}